 \definecolor{Refkey}{RGB}{255,127,0}
 \definecolor{Labelkey}{RGB}{127,0,255}
  \def\SK@refcolor{\color{Refkey}}
  \def\SK@labelcolor{\color{Labelkey}}
  \definecolor{mdg}{RGB}{0,177,0} 
  \definecolor{mdb}{RGB}{0,0,191}
  \definecolor{mddb}{RGB}{0,0,91}
  \definecolor{mdy}{RGB}{255,69,0} 
\DeclareMathOperator{\dethad}{dh}
\DeclareMathOperator{\characteristic}{char}
\DeclareMathOperator{\pos}{pos}
\DeclareMathOperator{\rank}{rank}
\newtheorem{theorem}{Theorem}
\newtheorem{proposition}{Proposition}
\theoremstyle{remark}
\theoremstyle{definition}
\newtheorem{definition}{Definition}
\newtheorem{example}{Example}
\title{Odd-gon relations and their cohomology}
\author{Igor G. Korepanov}
\date{February--July 2022}
\begin{document}

\sloppy

\maketitle

\begin{flushright}{\it To the memory of Aristophanes Dimakis}\end{flushright}

\medskip

\begin{abstract}
A cohomology theory for ``odd polygon'' relations---algebraic imitations of Pachner moves in dimensions 3, 5, \ldots ---is constructed. Manifold invariants based on polygon relations and nontrivial polygon cocycles are proposed. Example calculation results are presented.
\end{abstract}

\section{Introduction}\label{s:i}

\subsection{Pachner moves}\label{ss:m}

A triangulation of a closed piecewise linear (PL) manifold can be transformed into any other triangulation of the same manifold using a finite sequence of elementary re-buildings---\emph{Pachner moves}~\cite{Pachner,Lickorish}. For a manifold~$M$ of dimension~$\nu$, these moves can be described as follows.

The boundary~$\partial\Delta^{\nu+1}$ of a simplex~$\Delta^{\nu+1}$ of the next dimension consists of $\nu+2$ simplices~$\Delta^{\nu}$. Represent~$\partial\Delta^{\nu+1}$ as the union of two parts, the first containing $\kappa$ simplices~$\Delta^{\nu}$ and called \emph{initial cluster}, and the other containing the remaining $(\nu+2-\kappa)$ simplices~$\Delta^{\nu}$ and called \emph{final cluster}, where $\kappa$ is any of the numbers $1,\ldots,\nu+1$. Suppose that a triangulation of~$M$ contains a part isomorphic to the initial cluster, then ``Pachner move $\kappa \mapsto \nu+2-\kappa$'' consists in replacing this part with the final cluster. This is possible because the two clusters have the same boundary, and either of them can be glued to the rest of the triangulation of~$M$ by this boundary.

There are thus $\nu+1$ kinds of Pachner moves in $\nu$ dimensions:
\begin{equation}\label{kP}
  1 \mapsto \nu+1, \qquad \ldots\,, \qquad \nu+1 \mapsto 1,
\end{equation}
and any two equidistant from the ends of the list~\eqref{kP} are inverse to each other.

In this paper, we will be interested mostly in the case of an \emph{odd} dimension
\begin{equation}\label{nun}
\nu=2n-1, \qquad n=2,3,\ldots \,.
\end{equation}
A special role will be played by the `central' moves $n+1\leftrightarrow n$, although we will consider all other moves as well.

\subsection{Polygon relations}\label{ss:p}

A \emph{polygon relation} is an algebraic imitation of a Pachner move.

Before making this definition more specific, we note that, with algebraic formulas of this kind, there is a hope to construct some kind of topological field theories for piecewise linear manifolds, yielding, in particular, PL manifold invariants.

Besides, polygon relations are beautiful algebraic and combinatorial structures that will definitely find connections and applications in many branches of mathematics. Note especially that they arise within the \emph{higher Tamari orders} approach~\cite{DM-H}.

In this paper, we combine the `higher Tamari' and `topological' approaches. Our terminology will largely be in accordance with the mentioned paper~\cite{DM-H} and especially~\cite{DK} (where we show how the ideas of~\cite{DM-H} can be developed both combinatorially and algebraically). In particular, this implies that we call ``odd polygon'', or simply ``odd-gon'', the relation corresponding to the move $n+1\mapsto n$, in the way we are going to explain. As for all other moves, we treat them within the ``full polygon'' framework of Section~\ref{s:F}.

We number the simplices~$\Delta^{\nu}$ taking part in the mentioned move from~1 through $\nu+2=2n+1$. The initial cluster consists, by definition, of the `odd' simplices $1,3,\ldots,2n+1$, while the final cluster---of the `even' simplices $2,4,\ldots,2n$. Each~$\Delta^{\nu}$ has $2n$~faces (of dimension~$\nu-1$), each of these belongs also to one of the other~$\Delta^{\nu}$. For $\Delta^{\nu}$ number~$q$, the other~$\Delta^{\nu}$, and hence the faces of the $q$-th~$\Delta^{\nu}$, form the ordered set
\begin{equation*}
L_q = \{1,\ldots,\hat q,\ldots,2n+1\},
\end{equation*}
where `hat' means omission. We declare `input' the $n$ faces corresponding to the \emph{odd positions} in~$L_q$, and `output' those corresponding to the even positions in~$L_q$.

\begin{example}\label{x:q}
Let $n=3$, then $2n+1=7$, so this corresponds to the \emph{heptagon} relations. For $q=1$, `inputs' are the common faces of this simplex~$\Delta^5$ with simplices $2,4,6$---we denote these faces $12,14,16$, while `outputs' are faces $13,15,17$. For $q=4$, input faces are $14,34,46$, while output faces are $24,45,47$.
\end{example}

We introduce a set~$X$ whose elements are called \emph{colors}. These colors are assigned to $(\nu-1)$-dimen\-sional faces, and for each~$\Delta^{\nu}$, by definition, the $n$ `output' colors are functions of the $n$ `input' colors. For each of the initial and final cluster, there are $\frac{n(n+1)}{2}$ `global input' faces for the whole cluster, whose colors can be taken arbitrary, and determine the colors of all other faces. Among these, there are $\frac{n(n+1)}{2}$ `global output' faces, that is, such faces that are input for none of the simplices~$\Delta^{\nu}$. Moreover, both the `global input' and `global output' faces are the same for the two clusters.

So, each of the clusters provides dependences of the `global output' colors on `global input' colors, and we say that the odd-gon relation holds if these dependences are the same. Combinatorial details can be found in~\cite[Section~II]{DK} (the fact that only \emph{linear} dependences are considered there does not affect the combinatorics) and in Section~\ref{s:r} below.

The `global input' and `global output' faces form together the common \emph{boundary} of the initial and final clusters. Our odd-gon relation means thus that the dependences between the boundary colors are the same for the two clusters.

\subsection{Notations}\label{ss:nota}

$\Delta^m$ denotes an $m$-dimen\-sional simplex, that is, a simplex with $(m+1)$ vertices. Four kinds of simplices will be of most importance in this paper:
\begin{itemize}
 \item $\Delta^{2n}$ ---its boundary~$\partial \Delta^{2n}$ is the union of the l.h.s.\ and r.h.s.\ of Pachner move $(n+1)\mapsto n$ or any other move in $2n-1$ dimensions. We call such simplex \textbf{P-simplex}, from the word `Pachner';
 \item $\Delta^{2n-1}$ ---a simplex of the maximal dimension in a $(2n-1)$-dimen\-sional PL manifold. We call such simplex \textbf{d-simplex}, from the word `dimension';
 \item $\Delta^{2n-2}$ ---a face of~$\Delta^{2n-1}$. These faces will be ``colored'' by elements~$x$ of some ``set of colors''~$X$. We call such simplex simply \textbf{face}, if there is no risk of confusion;
 \item $\Delta^{n-2}$ ---to such simplices special `generating' colorings will correspond. We call such simplex \textbf{g-simplex}, from the word `generating'.
\end{itemize}

For brevity, we often use the following ``complemental'' notations. A d-simplex~$\Delta^{2n-1}$ considered in the context of a Pachner move is determined by pointing out the \emph{only} vertex~$q$ from the set $\{1,\ldots,2n+1\}$ that is does \emph{not} contain. So, in this paper it can be denoted simply as~$q$. Similarly, a face~$\Delta^{2n-2}$ not containing vertices $q$ and~$r$ can be denoted simply~$qr$ or, equivalently,~$rq$; this is of course exactly the face common for d-simplices $q$ and~$r$.

\subsection{Contents of the rest of this paper}\label{ss:r}

Below,
\begin{itemize}\itemsep 0pt
 \item in Section~\ref{s:r}, we construct a specific algebraic odd-gon relation. We start with introducing objects related to simplices of different dimensions, and arrive at a relation between products of matrices, linking together the `combinatorial topological' and `higher Tamari' approaches;
 \item in Section~\ref{s:F}, we extend the mentioned relation from move $(n+1)\mapsto n$ onto all Pachner moves---what we call ``full polygon'';
 \item in Section~\ref{s:coho}, we describe a cohomology theory for our polygon relations;
 \item in Section~\ref{s:f}, we explain how that cohomology theory can be used for constructing invariants of piecewise linear manifolds, using nontrivial polygon $(2n-1)$-cocycles;
 \item in Section~\ref{s:2n-2}, we show one way of how to construct actual manifold invariants. Namely, there exists a $(2n-2)$-cocycle in characteristic zero from which a desired $(2n-1)$-cocycle can be obtained in a finite characteristic (using a procedure reminiscent remotely of a Bockstein homomorphism);
 \item in Section~\ref{s:5c}, we show the existence and present calculations related to a nontrivial heptagon $(2n-1)$-cocycle in characteristic zero. It probably does not have immediate topological applications but demonstrates an interesting algebraic structure;
 \item in Section~\ref{s:d}, we discuss the results and directions of future research.
\end{itemize}

\section{Construction of odd-gon relations}\label{s:r}

\subsection[Definition of $(n-2)$-simplex vectors]{Definition of $\boldsymbol{(n-2)}$-simplex vectors}\label{ss:v-def}

Let $n\ge 2$ be an integer, and consider a $2n$-simplex~$\Delta^{2n}$ whose vertices~$i$ we would like to denote by numbers from $i=1$ through $i=2n+1$. In this section, its boundary~$\partial \Delta^{2n}$ will serve us as the union of the two sides of our Pachner move $(n+1) \mapsto n$.

Let $X$ be any fixed set, called \emph{set of colors}.

\begin{definition}\label{d:col}
A \emph{coloring} of the $(2n-2)$-dimen\-sional faces~$v=\Delta^{2n-2} \subset \Delta^{2n}$, which we also call simply a coloring of~$\Delta^{2n}$, is any function assigning an element of~$X$, called \emph{color}, to each~$v$.
\end{definition}

All such colorings form the $n(2n+1)$-th Cartesian degree~$X^{\times n(2n+1)}$ of set~$X$.

\smallskip

Let
\begin{equation}\label{A}
\mathcal M = \begin{pmatrix} \alpha _1 & & \dots & & \alpha _{2n+1} \\ \beta _1 & & \dots & & \beta _{2n+1} \\ \gamma _1 & & \dots & & \gamma _{2n+1} \end{pmatrix}
\end{equation}
be a $3\times (2n+1)$ matrix whose entries are \emph{indeterminates} (algebraically independent elements) over a field~$F$. We do not specify this~$F$ at the moment, but the reader may think of $F=\mathbb Q$ ---the field of rational numbers, or $F=\mathbb F_q$ ---a finite field.

We now choose the color set to be the field
\begin{equation}\label{bF}
X = \mathcal F \stackrel{\mathrm{def}}{=} F(\alpha _1, \ldots, \gamma _{2n+1}) 
\end{equation}
of rational functions of the entries of matrix~$\mathcal M$ over~$F$.

We denote
\begin{equation}\label{dijk}
d_{ijk} = \left| \begin{matrix} \alpha _i & \alpha _j & \alpha _k \\ \beta _i & \beta _j & \beta _k \\ \gamma _i & \gamma _j & \gamma _k \end{matrix} \right|
\end{equation}
the determinant made of its $i$-th, $j$-th and $k$-th columns.

Recall (Subsection~\ref{ss:nota}) that a \emph{g-simplex} is any simplex of dimension~$(n-2)$.

\begin{definition}\label{d:he}
For a given g-simplex~$b=\Delta^{n-2}$, its corresponding \emph{g-simplex vector}~$e_b$ is the coloring of~$\Delta^{2n}$ whose component~$e_b|_v$ on each $v=\Delta^{2n-2}$ is the following product of determinants~\eqref{dijk} over the vertices of~$b$:
\begin{equation}\label{e_b}
e_b|_v = \prod_{i\in b} d_{ilm},
\end{equation}
where $l < m$ are the two vertices of~$\Delta^{2n}$ \emph{not} belonging to~$v$. Recall (Subsection~\ref{ss:nota}) that we can use, in such situation, notation $v=lm$ (and this regardless of whether $l<m$).
\end{definition}

Definition~\eqref{e_b} generalizes the \emph{edge vectors} introduced in~\cite{hepta_1} for the \emph{heptagon} relation, see~\cite[(16)]{hepta_1}. Note that
\begin{equation*}
e_b|_v = 0 \quad \text{ if } \quad b\not\subset v.
\end{equation*}

\begin{definition}\label{d:pc}
A \emph{g-coloring} of P-simplex~$\Delta^{2n}$ is any element of the linear subspace $V\subset F^{n(2n+1)}$ spanned by all g-simplex vectors. A g-coloring of any d-simplex~$\Delta^{2n-1} \subset \Delta^{2n}$ is the restriction of any g-coloring of~$\Delta^{2n}$ on this~$\Delta^{2n-1}$.
\end{definition}

\subsection{Linear relations between g-simplex vectors}\label{ss:v-rel}

\begin{proposition}\label{p:r}
Let $i_1\ldots i_{n-2}$ be an $(n-3)$-dimen\-sional face, and $j,k,l,m$ ---four vertices lying outside of it. Then, the following relation between four g-simplex vectors holds:
\begin{equation}\label{lr}
d_{klm}e_{i_1\ldots i_{n-2}j} - d_{jlm}e_{i_1\ldots i_{n-2}k} + d_{jkm}e_{i_1\ldots i_{n-2}l} - d_{jkl}e_{i_1\ldots i_{n-2}m} = 0.
\end{equation}
\end{proposition}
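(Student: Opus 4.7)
The plan is to verify the vector identity~\eqref{lr} componentwise, by showing that the linear combination evaluates to zero on every face $v=\Delta^{2n-2}\subset\Delta^{2n}$. Fix such a $v$, and use complementary notation $v=pq$, where $p,q$ are the two vertices of $\Delta^{2n}$ not lying in $v$.

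By Definition~\ref{d:he}, the component of each g-simplex vector on $v$ factorizes as
\begin{equation*}
e_{i_1\ldots i_{n-2}j}\big|_v = \Bigl(\prod_{s=1}^{n-2} d_{i_s pq}\Bigr)\, d_{jpq},
\end{equation*}
and analogously for the terms with $k,l,m$ in place of $j$. Writing $D_v=\prod_{s=1}^{n-2} d_{i_s pq}$, this factor is common to all four summands and can be pulled out. I note that this expression remains uniformly valid even when some $i_s$ or $j$ coincides with $p$ or $q$: in that case the relevant determinant vanishes, matching the stated vanishing $e_b|_v=0$ whenever $b\not\subset v$. Consequently no separate case analysis for degenerate configurations is needed.

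After pulling out $D_v$, the proposition reduces to the scalar identity
\begin{equation*}
d_{klm}\,d_{jpq} - d_{jlm}\,d_{kpq} + d_{jkm}\,d_{lpq} - d_{jkl}\,d_{mpq} = 0,
\end{equation*}
to be proved for arbitrary indices $j,k,l,m,p,q$ (including coincidences). For this I would invoke the fact that any four column-vectors $a_i=(\alpha_i,\beta_i,\gamma_i)^{\top}$ in the three-dimensional space $F^3$ are linearly dependent, with the explicit Cramer-type dependence
\begin{equation*}
d_{klm}\,a_j - d_{jlm}\,a_k + d_{jkm}\,a_l - d_{jkl}\,a_m = 0 \in F^3.
\end{equation*}
This is just the row-by-row cofactor expansion of the vanishing $4\times 4$ determinant obtained by repeating any one row of the $3\times 4$ matrix $[\,a_j,a_k,a_l,a_m\,]$. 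Now apply the linear functional $a\mapsto\det[\,a,a_p,a_q\,]$ to both sides; by multilinearity of the determinant in its first column, this yields exactly the required scalar relation.

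The argument is essentially routine once one adopts the complementary notation, so there is no real obstacle of a conceptual kind. The main thing to keep an eye on is the bookkeeping around degenerate cases (overlaps between $\{i_1,\ldots,i_{n-2},j,k,l,m\}$ and $\{p,q\}$); the uniform factorization $e_{\ldots j}|_v = D_v\,d_{jpq}$ together with the universal Pl\"ucker-type identity displayed above handles all of these simultaneously.
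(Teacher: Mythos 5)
Your proof is correct and follows essentially the same route as the paper's: evaluate componentwise on a face $v=pq$, factor out $\prod_s d_{i_s pq}$, and recognize the remainder as a Pl\"ucker (Grassmann--Pl\"ucker) bilinear relation. You merely make explicit what the paper leaves as a one-line remark, including a clean derivation of the Pl\"ucker identity from the linear dependence of four vectors in $F^3$ and a correct observation that the degenerate cases are handled automatically by the vanishing of determinants with repeated columns.
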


\begin{proof}
Consider the components of all summands in~\eqref{lr} for a face~$pq$. After substituting~\eqref{e_b} and factoring out $\prod_{i\in \{i_1,\ldots, i_{n-2}\}}e_{ipq}$, what remains is just a Pl\"ucker bilinear relation.
\end{proof}

There are $n+2$ vertices involved in linear relation~\eqref{lr}: $i_1,\ldots, i_{n-2}$ plus $j,k,l,m$. The following proposition shows that there are \emph{no} linear relations in which not more than $n+1$ vertices are involved, or, in other words, no linear relations within any simplex~$\Delta^n$.

\begin{proposition}\label{p:n}
Vectors~$e_b$ for all $b\subset \Delta^n$, where $\Delta^n$ is any chosen $n$-simplex, are linearly independent.
\end{proposition}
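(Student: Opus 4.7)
The plan is a straightforward ``separating face'' argument: for each g-simplex $b_0 \subset \Delta^n$ I want to exhibit a single face $v_0 = \Delta^{2n-2}$ of $\Delta^{2n}$ on which $e_{b_0}$ has a nonzero component while every other $e_b$ with $b \subset \Delta^n$, $b \ne b_0$, vanishes. Once this is done, reading off the $v_0$-component of a hypothetical relation $\sum_{b \subset \Delta^n} c_b\, e_b = 0$ immediately forces $c_{b_0} = 0$, and running over all $b_0$ finishes the proof.

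To set things up, I would label the vertices of the chosen $\Delta^n$ by $\{1,\ldots,n+1\} \subset \{1,\ldots,2n+1\}$ and parametrize each $b \subset \Delta^n$ by its complement in $\{1,\ldots,n+1\}$, which is a two-element subset $\{p,q\}$. For $b_0$ with complement $\{p,q\}$, the natural candidate for the separating face is $v_0 = pq$ in the complemental notation of Subsection~\ref{ss:nota}, i.e.\ the $(2n-2)$-face of $\Delta^{2n}$ omitting the same two vertices $p$ and $q$. Definition~\ref{d:he} then gives
\[
  e_b|_{v_0} = \prod_{i \in b} d_{ipq},
\]
and algebraic independence of the entries of $\mathcal M$ over $F$ makes each factor a nonzero element of $\mathcal F$ whenever $i \notin \{p,q\}$. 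Hence $e_b|_{v_0} \ne 0$ iff $b \cap \{p,q\} = \emptyset$.

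The remaining step is a purely combinatorial check: among $(n-1)$-element subsets $b$ of $\{1,\ldots,n+1\}$, the condition $b \cap \{p,q\} = \emptyset$ picks out $b_0$ and nothing else, since any other $b$ has complement $\{p',q'\} \ne \{p,q\}$ inside $\{1,\ldots,n+1\}$, so $\{p,q\} \not\subset \{p',q'\}$ and $b = \{1,\ldots,n+1\} \setminus \{p',q'\}$ must contain at least one of $p,q$. Thus $e_{b_0}|_{v_0} \ne 0$ while $e_b|_{v_0} = 0$ for every other $b \subset \Delta^n$, as desired.

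I do not foresee any real obstacle; the argument is essentially bookkeeping on top of Definition~\ref{d:he}, with the algebraic independence of the columns of $\mathcal M$ doing all the work. The only mildly delicate point is the alignment of the two ``complemental'' notations, for g-simplices inside $\Delta^n$ and for faces inside $\Delta^{2n}$, which is what makes the choice $v_0 = pq$ so natural and what ultimately encodes the hypothesis that all $b$ in question lie inside one and the same $n$-simplex.
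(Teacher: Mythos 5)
Your proof is correct and is essentially the same as the paper's: both isolate the coefficient of a given $b_0\subset\Delta^n$ by evaluating the putative relation on the face $v_0=pq$ complementary to $b_0$ inside $\Delta^n$, observing that $e_b|_{v_0}=0$ unless $b=b_0$ and that $e_{b_0}|_{v_0}=\prod_{i\in b_0}d_{ipq}\ne 0$. The only difference is that you spell out the nonvanishing of $e_{b_0}|_{v_0}$ via the algebraic independence of the entries of $\mathcal M$, which the paper leaves implicit.
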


\begin{proof}
Let $\Delta^n=i_1\ldots i_{n+1}$ be an $n$-simplex, and let $i_k\ne i_l$ be two of its vertices. Suppose a linear relation
\begin{equation}\label{bl}
\sum _{b\subset \Delta^n} \lambda _b e_b = 0
\end{equation}
holds. Consider the component of~\eqref{bl} corresponding to face~$i_k i_l$. The only g-simplex~$b$ that is contained in this face is
\begin{equation*}
b_{kl} = i_1 \ldots \hat i_k \ldots \hat i_l \ldots i_{n+1},
\end{equation*}
where ``hat'' means omission (of $i_k$ and~$i_l$). We get thus
\begin{equation*}
\lambda _{b_{kl}} e_{b_{kl}} |_{kl} = 0 \;\; \Rightarrow \;\; \lambda _{b_{kl}} = 0.
\end{equation*}
As we can take arbitrary non-coinciding $i_k$ and~$i_l$, all the lambdas in~\eqref{bl} are zero.
\end{proof}

One~$\Delta^n$ has $\binom{n+1}{n-1}=\frac{n(n+1)}{2}$ faces of dimension~$n-2$, so it generates a $\frac{n(n+1)}{2}$-dimen\-sional linear space of g-colorings for~$\Delta^{2n}$.

On the other hand, it is an easy exercise to show, using linear relations~\eqref{lr}, that~$e_b$ for any~$b\subset \Delta^{2n}$ can be expressed through the vectors~$e_b$ taken for $b\subset \Delta^n$ with one chosen~$\Delta^n$. Hence, the following proposition holds.

\begin{proposition}\label{p:d}
The dimension of the space~$V_g(\Delta^{2n})$ of g-colorings of~$\Delta^{2n}$ is
\begin{equation}\label{d}
\dim V_g(\Delta^{2n}) = \frac{n(n+1)}{2}.
\end{equation}
\qed
\end{proposition}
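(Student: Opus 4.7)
The plan is to sandwich $\dim V_g(\Delta^{2n})$ between $\tfrac{n(n+1)}{2}$ on both sides. The lower bound is immediate from Proposition~\ref{p:n}: fixing any $n$-simplex $\Delta^n\subset\Delta^{2n}$ gives us $\binom{n+1}{n-1}=\tfrac{n(n+1)}{2}$ vectors $e_b$ (one per $(n-2)$-face $b\subset\Delta^n$), and these are linearly independent in $V_g(\Delta^{2n})$. So the real content is the matching upper bound: \emph{every} $e_b$ with $b\subset\Delta^{2n}$ lies in the span of the $e_{b'}$ with $b'\subset\Delta^n$.

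For the upper bound I would induct on the quantity $\mu(b):=|b\setminus\Delta^n|$, the number of vertices of~$b$ that lie outside the fixed~$\Delta^n$. The base case $\mu(b)=0$ is trivial. For $\mu(b)\ge 1$, pick some $j\in b\setminus\Delta^n$, write $b=\{i_1,\ldots,i_{n-2},j\}$, and apply the Pl\"ucker-type relation~\eqref{lr} to the $(n-3)$-face $\{i_1,\ldots,i_{n-2}\}$ with $j$ and three further vertices $k,l,m$ chosen from $\Delta^n\setminus\{i_1,\ldots,i_{n-2}\}$. The relation can be solved for $e_b$ because $d_{klm}$, being a $3\times 3$ minor of the matrix~$\mathcal M$ of algebraically independent indeterminates, is a nonzero element of~$\mathcal F$, hence invertible. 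The three resulting vectors $e_{\{i_1,\ldots,i_{n-2}\}\cup\{k\}}$, $e_{\{i_1,\ldots,i_{n-2}\}\cup\{l\}}$, $e_{\{i_1,\ldots,i_{n-2}\}\cup\{m\}}$ each satisfy $\mu\le \mu(b)-1$, since we traded the bad vertex~$j$ for a good one. So the induction drives $\mu$ to zero.

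The one place where I have to be careful, and which looks to me like the only real obstacle, is checking that the three auxiliary vertices $k,l,m$ can actually be found inside $\Delta^n\setminus\{i_1,\ldots,i_{n-2}\}$. This is just a counting argument: since $j\notin\Delta^n$, all $n-2-\mu(b)+1=n-1-\mu(b)$ good vertices of $b$ lie in $\{i_1,\ldots,i_{n-2}\}$, so the number of vertices of~$\Delta^n$ that lie outside $\{i_1,\ldots,i_{n-2}\}$ is at least $(n+1)-(n-2)=3$, with equality only in the extreme case $\mu(b)=1$. Three is exactly what relation~\eqref{lr} requires, so the inductive step always has the room it needs. Combining the two bounds yields $\dim V_g(\Delta^{2n})=\tfrac{n(n+1)}{2}$.
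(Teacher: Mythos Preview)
Your proof is correct and follows the same approach as the paper: the lower bound comes from Proposition~\ref{p:n}, and the upper bound from expressing every $e_b$ via relation~\eqref{lr} in terms of those $e_{b'}$ with $b'\subset\Delta^n$. The paper leaves that second step as ``an easy exercise''; your induction on $\mu(b)=|b\setminus\Delta^n|$ is exactly the intended way to carry it out, and your counting check that $|\Delta^n\setminus\{i_1,\ldots,i_{n-2}\}|\ge 3$ is the only place care is needed.
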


\subsection[Restriction on one $(2n-1)$-simplex]{Restriction on one $\boldsymbol{(2n-1)}$-simplex}\label{ss:2n}

In the previous subsection, we considered colorings of the whole $2n$-simplex, which we also call P-simplex (see Subsection~\ref{ss:nota}), and whose boundary is the union of the l.h.s.\ and r.h.s.\ of a Pachner move. Now, we are going to consider colorings of just one $(2n-1)$-simplex~$\Delta^{2n-1}$, which we also call d-simplex.

Take $\Delta^{2n-1}=m$ ---remember (see again Subsection~\ref{ss:nota}) that in our ``complemental'' notations this means the simplex \emph{without} vertex~$m$. We denote $e_b|_m$ the restriction of~$e_b$ onto the faces of~$m$. There is the following analogue of Proposition~\ref{p:r}.
\begin{proposition}\label{p:rr}
There are the following three-term linear relations:
\begin{equation}\label{lr_2n-1}
d_{klm}e_{i_1\ldots i_{n-2}j}|_m - d_{jlm}e_{i_1\ldots i_{n-2}k}|_m + d_{jkm}e_{i_1\ldots i_{n-2}l}|_m = 0.
\end{equation}
\end{proposition}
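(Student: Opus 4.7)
The proof should follow almost immediately from Proposition~\ref{p:r} by restricting the four-term identity \eqref{lr} to the d-simplex $m$. My plan is this: take the relation
\begin{equation*}
d_{klm}e_{i_1\ldots i_{n-2}j} - d_{jlm}e_{i_1\ldots i_{n-2}k} + d_{jkm}e_{i_1\ldots i_{n-2}l} - d_{jkl}e_{i_1\ldots i_{n-2}m} = 0
\end{equation*}
(which holds as an identity between colorings of the whole P-simplex) and evaluate it componentwise on each face $v \subset \Delta^{2n-1}=m$. Since a face of the d-simplex~$m$ is, in complemental notation, of the form $v = mp$ for some $p\ne m$, it is a face that does \emph{not} contain the vertex~$m$.

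The key observation is that the fourth term then vanishes identically. Indeed, the g-simplex $b = i_1\ldots i_{n-2}m$ contains the vertex~$m$, whereas no face $v$ of the d-simplex~$m$ contains~$m$; hence $b \not\subset v$, and by the remark immediately following Definition~\ref{d:he} we have $e_{i_1\ldots i_{n-2}m}|_v = 0$ for every such~$v$. (Alternatively, one may observe this directly from the product formula~\eqref{e_b}: if the two vertices outside $v$ are $m$ and $p$, then one of the factors is $d_{mmp}$, a determinant with two equal columns, hence zero.) Consequently the restriction of the first three terms of~\eqref{lr} to the faces of~$m$ already sums to zero, which is exactly~\eqref{lr_2n-1}.

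I do not anticipate a genuine obstacle here; the content of the proposition is really that one of the four terms in the Pl\"ucker-type relation degenerates upon passing from the P-simplex to one of its facets. The only small point worth being explicit about is that the identity~\eqref{lr_2n-1} is interpreted as an equality of vectors whose components are indexed by faces $v$ of~$m$, so it suffices to verify it componentwise on each such~$v$, and the argument above does exactly that.
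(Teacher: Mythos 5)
Your proposal is correct and matches the paper's argument exactly: restrict the four-term relation~\eqref{lr} to the faces of the d-simplex~$m$ and note that the fourth term vanishes because $e_{i_1\ldots i_{n-2}m}$ has only zero components there. Your two justifications for that vanishing (the containment criterion $b\not\subset v$ and the degenerate determinant $d_{mmp}$) are both valid and slightly more explicit than the paper's one-line remark.
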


\begin{proof}
This follows at once from~\eqref{lr}; the fourth term disappears because $e_{i_1\ldots i_{n-2}m}$ has only zero components on~$m$.
\end{proof}

\begin{proposition}\label{p:nr}
Vectors~$e_b|_m$ for all $b\subset \Delta^{n-1}$, where $\Delta^{n-1}$ is any chosen $(n-1)$-simplex not containing vertex~$m$, are linearly independent.
\end{proposition}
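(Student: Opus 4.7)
The plan is to mimic closely the proof of Proposition~\ref{p:n}, choosing for each g-simplex a test face on which only that g-simplex has a nonzero component, and then invoking the algebraic independence of the entries of~$\mathcal M$ to conclude that the corresponding coefficient vanishes.

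Concretely, write $\Delta^{n-1} = i_1\ldots i_n$ with $m\notin\{i_1,\ldots,i_n\}$. The g-simplices $b\subset\Delta^{n-1}$ are exactly $b_k = i_1\ldots\hat i_k\ldots i_n$ for $k=1,\ldots,n$. Suppose
\begin{equation*}
\sum_{k=1}^{n} \lambda_k \, e_{b_k}\bigr|_m = 0.
\end{equation*}
For each fixed~$k$ I would look at the component of this identity on the face $v = mi_k$ of the d-simplex~$m$. Since $b_l$ contains the vertex~$i_k$ whenever $l\ne k$, we have $b_l \not\subset mi_k$ and hence $e_{b_l}|_{mi_k} = 0$ for every $l\ne k$, while $b_k\subset mi_k$. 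Thus the equation collapses to $\lambda_k \, e_{b_k}|_{mi_k} = 0$.

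It remains to check that $e_{b_k}|_{mi_k}$ is nonzero in~$\mathcal F$. By Definition~\ref{d:he}, the two vertices not lying in $v=mi_k$ are~$m$ and~$i_k$, so
\begin{equation*}
e_{b_k}\bigr|_{mi_k} = \prod_{i\in b_k} d_{i\,i_k\,m}.
\end{equation*}
Each index triple $(i,i_k,m)$ consists of three \emph{distinct} vertices (because $i\in b_k$ excludes~$i_k$, and $m\notin\Delta^{n-1}$ excludes all~$i_j$), so each determinant $d_{i\,i_k\,m}$ is a nonzero polynomial in the algebraically independent entries of~$\mathcal M$, and the product is nonzero in~$\mathcal F$. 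This forces $\lambda_k=0$, and since $k$ was arbitrary, the vectors $e_{b_k}|_m$ are linearly independent.

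There is no real obstacle to this argument; the only thing to be careful about is the choice of the test face $mi_k$ (the vertex~$m$ must appear in it, since we are restricting to the d-simplex~$m$, whose faces are exactly those of the form $mr$), and the verification that the resulting product of determinants does not accidentally vanish — which is immediate from the algebraic independence hypothesis on~$\mathcal M$.
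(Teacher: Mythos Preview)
Your proof is correct and follows essentially the same approach as the paper's: for each $k$ you test the relation on the face~$mi_k$, observe that only $b_k$ among the g-simplices of~$\Delta^{n-1}$ is contained in that face, and conclude $\lambda_k=0$. You are slightly more explicit than the paper in writing out the product $\prod_{i\in b_k} d_{i\,i_k\,m}$ and checking it is nonzero, but the argument is the same.
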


\begin{proof}
In analogy with the proof of Proposition~\ref{p:n}, consider a relation
\begin{equation}\label{blr}
\sum _{b\subset \Delta^{n-1}} \lambda _b e_b|_m = 0.
\end{equation}
Consider the component of~\eqref{blr} corresponding to face~$im$, where $i\in \Delta^{n-1}$. The only g-simplex~$b$ that is contained in this face is that with all vertices of~$\Delta^{n-1}$ except~$i$, denote this g-simplex~$b_i$. We get thus
\begin{equation*}
\lambda _{b_i} e_{b_i} |_{im} = 0 \;\; \Rightarrow \;\; \lambda _{b_i} = 0.
\end{equation*}
As we can take arbitrary~$i$, all the lambdas in~\eqref{blr} are zero.
\end{proof}

One~$\Delta^{n-1}$ has $\binom{n}{n-1}=n$ faces of dimension~$n-2$, so it generates an $n$-dimen\-sional linear space of g-colorings for~$\Delta^{2n-1}$.

On the other hand, it is an easy exercise to show, using linear relations~\eqref{lr_2n-1}, that $e_b|_m$ for any $b \subset \Delta^{2n-1}\not \ni m$ can be expressed through the vectors~$e_b|_m$ taken for $b \subset \Delta^{n-1}$ with one chosen~$\Delta^{n-1}$ not containing~$m$. Hence, the following proposition holds.

\begin{proposition}\label{p:d-one}
The dimension of the space of g-colorings of~$\Delta^{2n-1}$ is~$n$.
\qed
\end{proposition}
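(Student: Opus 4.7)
The plan is to establish the equality by pinning down a basis: I will show both that $\dim V_g(\Delta^{2n-1}) \ge n$ and that $\dim V_g(\Delta^{2n-1}) \le n$, exhibiting the $n$ vectors from Proposition~\ref{p:nr} as an explicit spanning set of linearly independent vectors. The lower bound is immediate: pick any $(n-1)$-simplex $\Delta^{n-1}$ inside $\Delta^{2n-1}=m$ (which does not contain $m$); then by Proposition~\ref{p:nr} the $n$ g-simplex vectors $e_b|_m$ indexed by $b \subset \Delta^{n-1}$ are already linearly independent, so the dimension is at least $n$.

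For the upper bound I would keep the same $\Delta^{n-1}$ and prove by induction on the integer $t = |b \setminus \Delta^{n-1}|$ that every $e_b|_m$ with $b \subset \Delta^{2n-1}$ lies in the $\mathcal F$-linear span of $\{e_{b'}|_m : b' \subset \Delta^{n-1}\}$. The case $t=0$ is vacuous. For $t\ge 1$, select a vertex $j \in b \setminus \Delta^{n-1}$, label the remaining vertices of $b$ as $i_1,\ldots,i_{n-2}$, and choose two further vertices $k,l \in \Delta^{n-1} \setminus b$. Apply the three-term relation~\eqref{lr_2n-1} from Proposition~\ref{p:rr} to solve
$$e_{i_1\ldots i_{n-2}j}|_m = \frac{d_{jlm}}{d_{klm}}\, e_{i_1\ldots i_{n-2}k}|_m - \frac{d_{jkm}}{d_{klm}}\, e_{i_1\ldots i_{n-2}l}|_m;$$
division by $d_{klm}$ is legal because the entries of $\mathcal M$ are indeterminates, so $d_{klm}$ is a nonzero polynomial and hence invertible in $\mathcal F$. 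Each g-simplex on the right has one fewer `outside' vertex (we have replaced $j \notin \Delta^{n-1}$ by $k$ or $l \in \Delta^{n-1}$), so the induction hypothesis completes the step.

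The only point to verify with care is the counting behind the existence of two fresh vertices $k,l \in \Delta^{n-1}\setminus b$: since $|\Delta^{n-1}|=n$ and $|b\cap\Delta^{n-1}|=n-1-t$, one has $|\Delta^{n-1}\setminus b| = t+1 \ge 2$ whenever $t\ge 1$, which is just enough. Distinctness of $j,k,l,m$ and the fact that none of them coincides with any $i_s$ then follow automatically from $m \notin \Delta^{n-1}$, $j \notin \Delta^{n-1}$, and $k,l \notin b$. No deeper obstacle appears, in agreement with the author's description of the argument as an easy exercise; combining the two bounds gives $\dim V_g(\Delta^{2n-1}) = n$.
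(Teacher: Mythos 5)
Your proof is correct and follows exactly the paper's route: the lower bound via the $n$ independent vectors of Proposition~\ref{p:nr}, and the upper bound via the three-term relations~\eqref{lr_2n-1}, which the paper leaves as an ``easy exercise'' and you carry out in full by induction on $|b\setminus\Delta^{n-1}|$. The counting $|\Delta^{n-1}\setminus b|=t+1\ge 2$ and the distinctness checks for $j,k,l,m$ are exactly the points that need verifying, and you handle them correctly.
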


\subsection{Dependence between the face colors of a d-simplex in matrix form}\label{ss:mat}

Figure~\ref{fig:A}
\begin{figure}
 \begin{center}
  \includegraphics[scale=0.75]{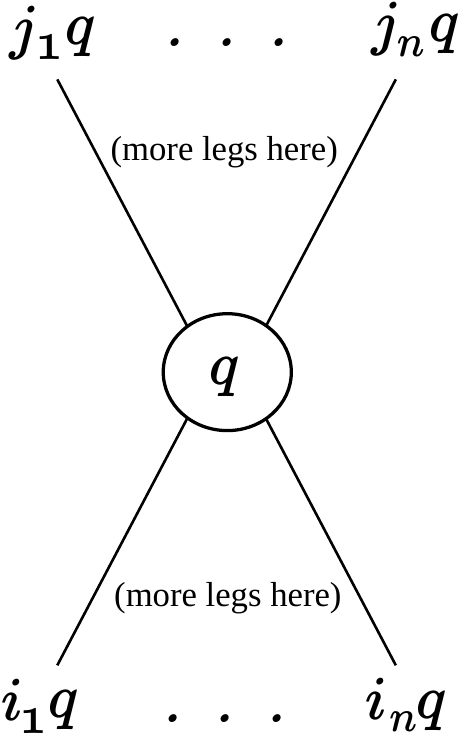}
 \end{center}
 \caption{Matrix $A^{(q)}$. Legs corresponding to the input faces are at the \emph{bottom}; legs corresponding to the output faces are at the \emph{top}; the matrix acts on \emph{rows}, that is, from the \emph{right}}
 \label{fig:A}
\end{figure}
depicts d-simplex~$i_1\ldots i_n j_1\ldots j_n$ in a `dual' way: the vertex---or actually the circle with the letter~$q$---symbolizes the simplex itself, while the edges, or `legs', correspond to its faces. We use `complemental' notations (recall Subsection~\ref{ss:nota}): numbers $i_1,\ldots, i_n, j_1,\ldots, j_n,q$ are supposed to make a permutation of numbers $1,\ldots,2n+1$, so our simplex is the $(2n-1)$-simplex \emph{without} vertex~$q$; similarly, each leg corresponds to the $(2n-2)$-face not having the two vertices with which the leg is labeled.

Proposition~\ref{p:d-one} suggests that we can take arbitrary colors for $n$ of~$2n$ faces of our simplex~$q$, let these be $i_1q,\ldots, i_nq$, then the colors of $j_1q,\ldots, j_nq$ are expected to be determined uniquely. The following reasoning shows that it is indeed so.

Consider $(n-2)$-simplex vector~$e_{i_2\ldots i_n}$. As $(2n-2)$-faces denoted $i_2 q$, \ldots, $i_n q$ do \emph{not} contain $i_2$, \ldots, $i_n$, respectively, the corresponding components of~$e_{i_2\ldots i_n}$ vanish. Other components are given by formula~\eqref{e_b}, so, the components of~$e_{i_2\ldots i_n}$ on the $n$ lower legs of Figure~\ref{fig:A}---call them together, for a moment, $e_{i_2\ldots i_n}|_{\mathrm{lower}}$---are
\begin{equation}\label{eb-niz}
e_{i_2\ldots i_n}|_{\mathrm{lower}} = \begin{pmatrix} d_{i_2i_1q}\cdots d_{i_ni_1q} & 0 & \dots & 0 \end{pmatrix},
\end{equation}
while the components on the $n$ upper legs are
\begin{equation}\label{eb-verh}
e_{i_2\ldots i_n}|_{\mathrm{upper}} = \begin{pmatrix} d_{i_2j_1q}\cdots d_{i_nj_1q} && \dots && d_{i_2j_nq}\cdots d_{i_nj_nq} \end{pmatrix}.
\end{equation}

Remembering that
\begin{equation}\label{A_row_1}
e_{i_2\ldots i_n}|_{\mathrm{lower}}\, A^{(q)} = e_{i_2\ldots i_n}|_{\mathrm{upper}},
\end{equation}
we get immediately the first row of matrix~$A^{(q)}$. Similarly, we get other rows, and the final answer for~$A^{(q)}$ is
\begin{equation}\label{A-expl}
\left( A^{(q)} \right)_{iq}^{jq} = \prod_{\substack{i'\ne i\\ i'q\;\mathrm{inputs}}} \frac{d_{i'jq}}{d_{i'iq}}\,. 
\end{equation}
Proposition~\ref{p:d-one} guarantees that there will arise no contradiction if we use other g-colorings for determining~$A^{(q)}$.

In such context, we call legs $i_1q,\ldots, i_nq$ \emph{input}, and legs $j_1q,\ldots, j_nq$ \emph{output}. In our pictures (as well as in~\cite{DK}), input legs lie below the corresponding vertex, and output legs---above. It is clear from our reasoning that \emph{any} $n$ legs---that is, faces of a d-simplex---can be declared input; the corresponding matrix will be given by~\eqref{A-expl} with the relevant permutation of indices.

\subsection[Odd polygon relation for matrices~$A^{(q)}$]{Odd polygon relation for matrices~$\boldsymbol{A^{(q)}}$}\label{ss:h}

Our odd polygon relation is the equality of two products of $\frac{n(n+1)}{2}\times \frac{n(n+1)}{2}$ matrices called~$A_{\mathcal B_q}^{(q)}$, each being a direct sum of matrix~\eqref{A-expl} and an identity matrix. More specifically: think of each~$A_{\mathcal B_q}^{(q)}$ as acting on $\frac{n(n+1)}{2}$-\emph{rows}, then by definition it acts nontrivially only on the entries with numbers in the set~$\mathcal B_q$ (which we are going to specify), while all the other entries remain intact under~$A_{\mathcal B_q}^{(q)}$. This implies of course that the cardinality of each set~$\mathcal B_q$ equals~$n$.

The definition of~$\mathcal B_q$ is as follows.

\begin{definition}\label{d:B}
Write all pairs~$(i,j)$ of \emph{odd} numbers from~1 through~$2n+1$, such that $i<j$, in the lexicographic order:
\begin{equation}\label{13}
\begin{aligned}
(1,3),\; (1,5),\; \ldots,\; (1,2n+1),\;\ldots,\; & \\
\boldsymbol{(2k-1,2k+1),\;(2k-1,2k+3),\;} & \boldsymbol{\dots, \; (2k-1,2n+1),}\; \\
& \ldots,\; (2n-1,2n+1),
\end{aligned}
\end{equation}
where we highlighted in bold a typical subsequence. There are $\frac{n(n+1)}{2}$ members in sequence~\eqref{13}, and by definition, $\mathcal B_q$ for an odd~$q$ consists of the positions of such pairs that include~$q$.

Now write all pairs~$(i,j)$ of \emph{even} numbers from~2 through~$2n$, such that $i\le j$ (pay attention to the non-strict inequality!), in the lexicographic order:
\begin{equation}\label{22}
\begin{aligned}
& (2,2),\; (2,4),\; \ldots,\; (2,2n),\;\ldots,\; \\
& \boldsymbol{(2k,2k),\;(2k,2k+2),\;\dots, \; (2k,2n),}\; \ldots,\; (2n,2n),
\end{aligned}
\end{equation}
where we also highlighted in bold a typical subsequence. There are again $\frac{n(n+1)}{2}$ members in sequence~\eqref{22}, and by definition, $\mathcal B_q$ for an even~$q$ consists again of the positions of such pairs that include~$q$.
\end{definition}

It is an easy exercise to check that Definition~\ref{d:B} gives the same as the definition of~$\mathcal B_q$ given in~\cite[Subsection~II.B]{DK}.

The following proposition states a fundamental property of sets~$\mathcal B_q$.

\begin{proposition}\label{p:u}
The intersection $\mathcal B_q \cap \mathcal B_r$ for $q\ne r$ consists of exactly one element.
\end{proposition}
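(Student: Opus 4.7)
The plan is to case-split on the parities of $q$ and $r$ and, in each case, reduce the claim to the uniqueness of the pair $\{q, r\}$ within the appropriate lex-ordered sequence.

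When $q$ and $r$ have the same parity (both odd, or both even), a position $p \in \mathcal B_q \cap \mathcal B_r$ is by definition one whose entry in \eqref{13} (respectively \eqref{22}) contains both $q$ and $r$. In \eqref{13} the entries are ordered pairs $(i,j)$ with $i<j$ drawn from the odd numbers $\{1,3,\ldots,2n+1\}$, and the unique candidate containing both $q$ and $r$ is $(\min\{q,r\},\max\{q,r\})$, which by lex order sits at a single position; this gives $|\mathcal B_q \cap \mathcal B_r| = 1$. The even case is analogous via \eqref{22}, with the small caveat that \eqref{22} contains diagonal entries $(i,i)$; such an entry contains only the single value $i$, so it cannot contribute when $q \ne r$, and again the unique off-diagonal pair $(\min\{q,r\},\max\{q,r\})$ determines the single intersection position.

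For $q$ and $r$ of opposite parities one must first pin down how the positions of \eqref{13} are aligned with the positions of \eqref{22}. Under the natural position-to-position match, the $p$-th entry of \eqref{13}, namely $(2a-1,2b-1)$ with $1 \le a < b \le n+1$, is paired with the $p$-th entry of \eqref{22}, namely $(2a,2b-2)$. Writing $q = 2k-1$ and $r = 2l$, the membership conditions $p \in \mathcal B_q$ and $p \in \mathcal B_r$ translate into disjunctions ``$a=k$ or $b=k$'' and ``$a=l$ or $b-1=l$'' respectively, and one then enumerates the four resulting sub-cases against the boundary constraints on $(a,b)$ to isolate the intended unique position.

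The main obstacle is precisely the cross-parity case. The same-parity arguments are essentially immediate from the uniqueness of a pair $\{q,r\}$ in a lex ordering of pairs, but the cross-parity analysis is more delicate: two disjunctions must be intersected and tested against the boundary constraints on the parameter domain, and a naive position-to-position identification risks producing multiple solutions when $q$ and $r$ are nearly adjacent. Getting the alignment between \eqref{13} and \eqref{22} exactly right, as per Definition \ref{d:B} and the matching convention of \cite[Subsection II.B]{DK}, is the step on which the uniqueness pivots; once the correct identification is in place, the verification reduces to a finite combinatorial check.
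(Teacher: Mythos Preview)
Your plan follows the paper's own argument closely: same case split on parities, same order-preserving bijection $(i,j)\leftrightarrow(i+1,j-1)$ between the sequences~\eqref{13} and~\eqref{22}. The same-parity cases are handled identically and correctly.

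The gap is in the cross-parity case, which you set up but do not finish, deferring instead to ``a finite combinatorial check.'' Your instinct that nearly adjacent $q,r$ are dangerous is exactly right, and if you actually carry out the four sub-cases you will find that the check \emph{fails}. With $q=2k-1$, $r=2l$ and the alignment $(2a-1,2b-1)\leftrightarrow(2a,2b-2)$, the sub-case ``$a=k$ and $a=l$'' forces $k=l$ but leaves $b$ unconstrained, so whenever $r=q+1$ every position whose odd pair has first entry~$q$ lies in $\mathcal B_q\cap\mathcal B_r$. Concretely, in the heptagon case one reads directly from~\eqref{hepta} that $\mathcal B_1=\mathcal B_2=\{1,2,3\}$ and $\mathcal B_3\cap\mathcal B_4=\{4,5\}$, neither a singleton. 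The paper's proof asserts that the cross-parity intersection is ``quite obviously'' the position of a single pair, but under Definition~\ref{d:B} as written this is not the case; only the same-parity half of the proposition holds. So the incompleteness in your write-up is not a matter of omitted routine bookkeeping---the bookkeeping, once supplied, contradicts the stated claim.
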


\begin{proof}
Suppose that, for instance, $q<r$. If $q$ and~$r$ are either both odd or both even, then $\mathcal B_q \cap \mathcal B_r$ obviously consists exactly of the position of pair $(q,r)$.

To consider other cases, note that the bijection between the odd and even pairs conserving their order is given by $(i,j)\leftrightarrow (i+1,j-1)$. So, if $q$ is odd and $r$ is even (and still $q<r$), then $\mathcal B_q \cap \mathcal B_r$ consists, again quite obviously, of the position of pair $(q,r+1)$, or, which is the same, of pair $(q+1,r)$. And if $q$ is even and $r$ is odd, the relevant pairs with the same position are $(q,r-1)$ and $(q-1,r)$.
\end{proof}

\begin{definition}\label{d:p}
We call the following equality \emph{odd polygon (odd-gon) relation}:
\begin{equation}\label{p}
A_{\mathcal B_1}^{(1)} A_{\mathcal B_3}^{(3)} \cdots A_{\mathcal B_{2n+1}}^{(2n+1)} = A_{\mathcal B_{2n}}^{(2n)} A_{\mathcal B_{2n-2}}^{(2n-2)} \cdots A_{\mathcal B_2}^{(2)} .
\end{equation}
\end{definition}

Before proving~\eqref{p}, we study its combinatorial structure. We think of both sides of~\eqref{p} as acting on $\frac{n(n+1)}{2}$-rows. Let there be an arbitrary $\frac{n(n+1)}{2}$-row, call it \emph{initial} row. The action of each~$A_{\mathcal B_q}^{(q)}$ is as follows: take the $\frac{n(n+1)}{2}$-row resulting after the action of all~$A_{\mathcal B_i}^{(i)}$ staying to the left of~$A_{\mathcal B_q}^{(q)}$ on the initial row, then take its $n$-subrow with entry numbers (positions) in~$\mathcal B_q$---we call these \emph{inputs} for~$A^{(q)}$---and act with~$A^{(q)}$ upon it; we call the entries of the resulting $n$-row \emph{outputs} for~$A^{(q)}$. Entries with positions outside~$\mathcal B_q$ stay intact; they remain thus either initial or outputs of a previous~$A^{(i)}$.

We call the results of action of the l.h.s.\ and r.h.s.\ of~\eqref{p} on the initial row \emph{final} rows; we will show soon, in Theorem~\ref{th:p}, that these two rows coincide.

\begin{definition}\label{d:l}
We say that matrix~$A^{(q)}$ is \emph{linked} to~$A^{(r)}$ if 
\begin{itemize}\itemsep 0pt
 \item either they are both in the same side of~\eqref{p}, and an output of one of these matrices serves as an input for the other,
 \item or they are in the different sides of~\eqref{p}, and have among their inputs the same entry of the initial row,
 \item or they are in the different sides of~\eqref{p}, and have among their outputs entries with the same position in the final row (these entries will actually coincide, but we have not proved it as yet!).
\end{itemize}
\end{definition}

\begin{proposition}\label{p:l}
Any $A^{(q)}$ is linked to \emph{all other}~$A^{(r)}$.
\end{proposition}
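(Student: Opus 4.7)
My plan is to fix $q\ne r$, use Proposition~\ref{p:u} to pin down the unique shared position $p\in\mathcal B_q\cap\mathcal B_r$, and then trace which matrices touch position $p$ and in what order on each side of~\eqref{p}. The whole argument is combinatorial; the explicit formula~\eqref{A-expl} plays no role.

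If $q$ and $r$ have the same parity, then both matrices sit on the same side of~\eqref{p}. Say they are both odd with $q<r$, so on the left-hand side $A^{(q)}$ acts before $A^{(r)}$. Any intermediate $A^{(s)}$ with $q<s<r$ odd that also acted on position $p$ would satisfy $p\in\mathcal B_q\cap\mathcal B_s$, which by Proposition~\ref{p:u} is again a single element; but then the odd pair at position $p$ would have to be both $(q,r)$ and $(q,s)$, forcing $s=r$, a contradiction. So no intermediate matrix disturbs $p$, and the output of $A^{(q)}$ at $p$ becomes the input of $A^{(r)}$---the first kind of link in Definition~\ref{d:l}. The case of two even indices is dual, using the decreasing order of action on the right-hand side.

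For mixed parity the proof of Proposition~\ref{p:u} provides more information. Taking $q$ odd, $r$ even and $q<r$, the position $p$ corresponds to the odd pair $(q,r+1)$ on the left and the even pair $(q+1,r)$ on the right. Therefore the l.h.s.\ matrices that touch $p$ are $A^{(q)}$ and $A^{(r+1)}$, acting in that order, while on the r.h.s.\ the matrices that touch $p$ are $A^{(r)}$ and $A^{(q+1)}$, with $A^{(r)}$ first because of the decreasing convention. Thus $A^{(q)}$ and $A^{(r)}$ are both the \emph{first} matrices to touch $p$ on their respective sides, so they share the same initial-row entry at position $p$---the second kind of link. If instead $q<r$ with $q$ even and $r$ odd, an analogous analysis shows that $A^{(q)}$ and $A^{(r)}$ are the \emph{last} matrices to touch $p$ on the r.h.s.\ and l.h.s.\ respectively, so they share the same position of the final row---the third kind of link. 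The one subtlety I foresee is the boundary case in which the even pair at $p$ collapses to $(k,k)$, so that only a single even matrix involves $p$ on the r.h.s.; this is easily absorbed into the mixed-parity arguments, since that single matrix then plays both the ``first'' and ``last'' roles at $p$ and the relevant initial-input or final-output identification goes through unchanged.
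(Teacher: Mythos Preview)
Your argument is correct, but it proceeds along a genuinely different line from the paper's.

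The paper argues by \emph{counting}: $A^{(q)}$ has $n$ inputs and $n$ outputs, hence $2n$ links, and one shows these hit $2n$ distinct matrices. Links at different positions cannot collide by Proposition~\ref{p:u}, and the input and output links at the \emph{same} position~$i$ cannot both go to a single~$A^{(r)}$ on the opposite side because that would force $i$ to lie in only the two sets $\mathcal B_q,\mathcal B_r$, whereas one checks from Definition~\ref{d:B} that every position lies in either three or four of the sets~$\mathcal B_k$.

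You instead fix the pair $(q,r)$, locate the unique shared position $p\in\mathcal B_q\cap\mathcal B_r$, and determine \emph{which} of the three link types in Definition~\ref{d:l} occurs, by reading off from the proof of Proposition~\ref{p:u} the explicit odd and even pairs sitting at~$p$ and checking the order of action on each side. This is more constructive: it not only proves the proposition but tells you, case by case, whether the link is an internal input--output connection, a shared initial entry, or a shared final entry. The cost is that you lean on the explicit pair identifications $(q,r+1)\leftrightarrow(q+1,r)$ etc.\ from the \emph{proof} of Proposition~\ref{p:u}, not merely its statement, and you must handle the diagonal even pairs $(k,k)$ separately (which you do). The paper's counting route is terser and avoids the case split, but it needs the extra combinatorial observation that every position belongs to at least three sets~$\mathcal B_k$, which your approach never invokes.

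One small stylistic point: in your same-parity paragraph, the sentence ``the odd pair at position $p$ would have to be both $(q,r)$ and $(q,s)$'' is a slightly roundabout way of saying that $p\in\mathcal B_s$ forces $s$ to appear in the odd pair at~$p$, which is $(q,r)$; you might state it that directly.
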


\begin{proof}
Any $A^{(q)}$ has $n$ inputs and $n$ outputs; each of these links $A^{(q)}$ to some~$A^{(r)}$, and we have to prove that these~$A^{(r)}$ are all different. If two links (inputs, outputs, or mixed) leading to some $A^{(r)}$ and~$A^{(s)}$ are at different positions, then $A^{(r)}$ and~$A^{(s)}$ cannot coincide because of Proposition~\ref{p:u}.

Let now two links be the input and output at the same position~$i$. If they could lead to the same~$A^{(r)}$, this would mean that $A^{(r)}$ is in the different side of~\eqref{p}, and that $i$ belongs to $\mathcal B_q$ and~$\mathcal B_r$, but to none other of sets~$\mathcal B_k$. But one can see from Definition~\ref{d:B} that any position belongs to either three or four sets~$\mathcal B_k$.
\end{proof}

Proposition~\ref{p:l} shows that the structure of~\eqref{p} corresponds to $\partial \Delta^{2n}$ divided in two parts. To make this statement more visual, it makes sense to think of~\eqref{p} in terms of its \emph{diagram}.

\begin{definition}\label{d:d}
The \emph{diagram} of (both sides of)~\eqref{p} or a similar relation (like~\eqref{gog} below) is made of vertices like that in Figure~\ref{fig:A}; if an output of one matrix is an input of another, the corresponding legs are joined together.
\end{definition}

A leg linking $A^{(q)}$ and~$A^{(r)}$ (in the sense of Definition~\ref{d:l}) corresponds to $(2n-2)$-face~$qr$. This face is \emph{boundary} provided $A_{\mathcal B_q}^{(q)}$ and~$A_{\mathcal B_r}^{(r)}$ are in the different sides of~\eqref{p}.

\begin{example}\label{x:h}
The \emph{heptagon} relation, corresponding to $n=3$, reads
\begin{equation}\label{hepta}
A_{123}^{(1)}A_{145}^{(3)}A_{246}^{(5)}A_{356}^{(7)}=A_{356}^{(6)}A_{245}^{(4)}A_{123}^{(2)}\,.
\end{equation}
Its diagram is shown in Figure~\ref{fig:hepta-new}.
\begin{figure}
 \begin{center}
  \includegraphics[scale=1.25]{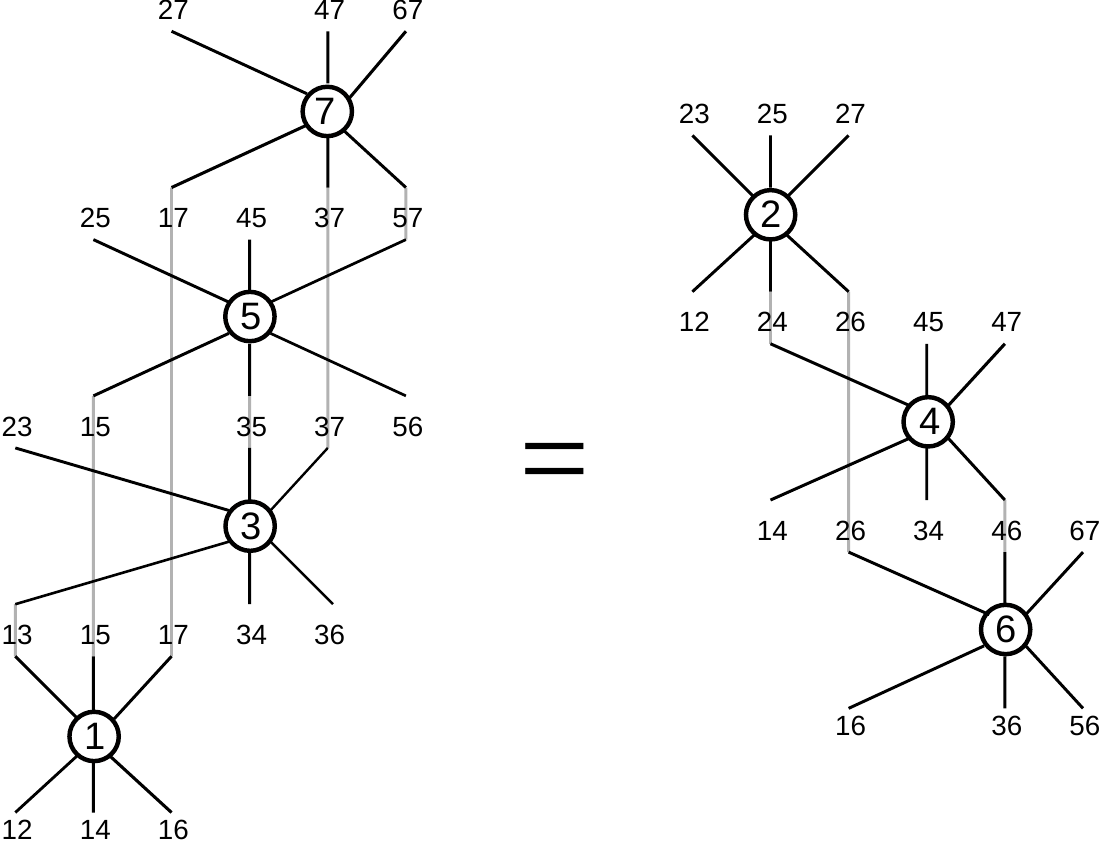}
 \end{center}
 \caption{Heptagon relation diagram}
 \label{fig:hepta-new}
\end{figure}
The corresponding Pachner move replaces the cluster of 5-simplices 1, 3, 5 and 7 with the cluster of 5-simplices 2, 4 and~6.
\end{example}

\begin{proposition}\label{p:e}
Take any vector~$e_b$ and put its components on the corresponding legs of the polygon relation diagram. Then, they will agree with all matrices~$A^{(q)}$: the action of each~$A^{(q)}$ on the components on its incoming legs gives exactly the components on its outgoing legs.
\end{proposition}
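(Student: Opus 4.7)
The natural approach is to observe that Proposition~\ref{p:e} is essentially a diagrammatic reformulation of the construction of the matrices~$A^{(q)}$ carried out in Subsection~\ref{ss:mat}, so the proof should reduce to applying that construction to each vertex of the diagram in turn. I would proceed in three brief steps.

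First, I would recall that $A^{(q)}$ was built, via~\eqref{A_row_1} and the explicit formula~\eqref{A-expl}, so as to have the following defining property: for any g-coloring of the d-simplex~$q$, the values on the $n$ output faces $j_1 q, \ldots, j_n q$ equal the values on the $n$ input faces $i_1 q, \ldots, i_n q$ multiplied from the right by~$A^{(q)}$. A priori this was verified only for the particular g-simplex vectors $e_{i_2 \ldots i_n}$ used to read off the rows, but Proposition~\ref{p:d-one} upgrades it automatically to every g-coloring of~$q$, since the space of g-colorings of~$\Delta^{2n-1}$ is $n$-dimensional and the spanning vectors used are linearly independent in it.

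Second, I would observe that for any g-simplex $b \subset \Delta^{2n}$, the vector~$e_b$ is itself a g-coloring of~$\Delta^{2n}$ (it is one of the spanning vectors in Definition~\ref{d:pc}), so its restriction~$e_b|_q$ to any d-simplex~$q$ is a g-coloring of that d-simplex. Combined with the first step, this immediately implies that the components of~$e_b$ on the input legs of vertex~$q$ in the diagram are sent by~$A^{(q)}$ to its components on the output legs of~$q$ --- which is precisely the claim of the proposition for a fixed~$q$. Since the argument applies uniformly to every vertex of the diagram, the full statement follows.

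The only remaining point --- and I expect it to be the sole technical obstacle, though a routine one --- is the bookkeeping: one must check that the entry labelled by face~$pq$ in the $\frac{n(n+1)}{2}$-row really is the \emph{same} entry whether one views it as a leg at vertex~$p$ or at vertex~$q$. But this is exactly the content of the combinatorial setup in Definition~\ref{d:B} together with Proposition~\ref{p:u}: the unique position in~$\mathcal B_p \cap \mathcal B_q$ is what labels the shared face~$pq$ in the diagram, and the explicit form~\eqref{A-expl} is indexed directly by the face labels $iq$ and $jq$, so no relabelling ambiguity arises. Beyond this identification, the proposition carries no further content.
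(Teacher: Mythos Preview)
Your proposal is correct and follows the same approach as the paper, just spelled out in considerably more detail: the paper's proof is a single sentence (``This follows immediately from the fact that the structure of the odd polygon relation~\eqref{p} corresponds exactly to the Pachner move $n+1 \to n$''), and your three steps are precisely the unpacking of what that sentence means---that each~$A^{(q)}$ was constructed to act correctly on g-colorings of d-simplex~$q$, that $e_b|_q$ is such a g-coloring, and that the leg labels in the diagram match the face labels used in~\eqref{A-expl}.
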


\begin{proof}
This follows immediately from the fact that the structure of the odd polygon relation~\eqref{p} corresponds exactly to the Pachner move $n+1 \to n$.
\end{proof}

\begin{theorem}\label{th:p}
Odd polygon relation~\eqref{p} does hold for matrices~\eqref{A-expl}.
\end{theorem}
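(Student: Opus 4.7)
The plan is to combine Proposition~\ref{p:e} with a dimension argument based on Proposition~\ref{p:d}. By Proposition~\ref{p:l} and the accompanying diagrammatic discussion, the two sides of~\eqref{p} correspond to the initial and final clusters of the Pachner move $(n+1)\mapsto n$, sharing the same $\frac{n(n+1)}{2}$ external ``global input'' legs and the same $\frac{n(n+1)}{2}$ external ``global output'' legs.

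The first step is to invoke Proposition~\ref{p:e}: for any g-simplex vector~$e_b$, putting its components on the legs of the diagram of either cluster yields an assignment consistent with every matrix~$A^{(q)}$. Hence, if the initial $\frac{n(n+1)}{2}$-row coincides with the restriction of~$e_b$ to the global input legs, then either side of~\eqref{p} produces, as its final row, the restriction of~$e_b$ to the global output legs. Therefore the l.h.s.\ and r.h.s.\ of~\eqref{p} agree on the subspace $W \subset \mathcal F^{n(n+1)/2}$ spanned by the restrictions of all g-simplex vectors to the global input legs.

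The second step is to show that $W$ is the entire row space. Consider the linear map $\rho\colon V_g(\Delta^{2n}) \to \mathcal F^{n(n+1)/2}$ sending a g-coloring to its restriction on the global inputs. Both the domain and codomain have dimension $\frac{n(n+1)}{2}$ by Proposition~\ref{p:d}, so it suffices to prove that $\rho$ is injective. If a g-coloring vanishes on every global input leg, then propagating through the matrices $A^{(q)}$ of the initial cluster in the order dictated by the l.h.s.\ of~\eqref{p} forces it to vanish on every internal leg of that cluster and on every global output leg; an analogous propagation through the final cluster confirms that the g-coloring is zero on all of~$\partial\Delta^{2n}$, hence zero.

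The main technical point to verify is the well-foundedness of this propagation, namely that each~$A^{(q)}$ has all its input legs already filled in at the moment it acts. This is a combinatorial statement about the ordering in Definition~\ref{d:p} and the sets~$\mathcal B_q$, and it is essentially guaranteed by Proposition~\ref{p:l} together with the unique-sharing property of Proposition~\ref{p:u}. Once injectivity of~$\rho$ is established, $W = \mathcal F^{n(n+1)/2}$, and the two products in~\eqref{p} coincide on a spanning set of rows, hence coincide as matrices.
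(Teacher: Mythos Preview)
Your argument is correct and follows essentially the same approach as the paper's proof, which simply cites Proposition~\ref{p:e} together with the dimension count $\dim V_g(\Delta^{2n})=\frac{n(n+1)}{2}$ from Proposition~\ref{p:d}. You have made explicit the step the paper leaves implicit, namely that the restriction map~$\rho$ to the global input legs is an isomorphism; your propagation argument for injectivity is a valid way to justify this, though the ``well-foundedness'' concern is slightly overcautious, since it is automatic from the very definition of the ordered matrix product in~\eqref{p} that each~$A^{(q)}$ sees, at its positions in~$\mathcal B_q$, either initial entries or outputs of earlier factors.
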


\begin{proof}
This follows from Proposition~\ref{p:e} and the fact that the dimension of the linear space of g-colorings is $n(n+1)/2$, see~\eqref{d}, which is the same as the number of input legs of the l.h.s.\ or r.h.s.\ of the diagram of~\eqref{p}.
\end{proof}
Relation~\eqref{p} guarantees that the boundary colorings mentioned in Definition~\ref{d:F} are the same for one specific Pachner move, namely, $(n+1)\mapsto n$ and such the there are d-simplices $1,3,\ldots,2n+1$ in its initial cluster, and $2,4,\ldots,2n$ in its final cluster; d-simplex~$q$ means here, as we remember, the simplex with all vertices $1,\ldots,2n+1$ except~$q$. If there are other d-simplices in both sides of a move $(n+1)\mapsto n$, there is still no big problem to get a corresponding odd-gon rela

\section{Full polygon---relations corresponding to all Pachner moves}\label{s:F}

\subsection{Gauge transformations of the colors}\label{ss:gt}

First, a small preparatory work.

Odd-gon relation~\eqref{p} remains valid if we renormalize colors~$x_u$ of faces~$u$, that is, make a substitution
\begin{equation}\label{g}
x_u = \lambda _u x_u^{\mathrm{new}},
\end{equation}
where $\lambda _u$ are nonzero elements of the field~$\mathcal F$.

For a matrix~$A^{(q)}$ with input faces $u_1,\ldots,u_n$ and output faces $v_1,\ldots,v_n$, this means the following transformation with two diagonal matrices:
\begin{equation}\label{lg}
A^{(q)} \mapsto \left(A^{(q)}\right)^{\mathrm{new}} = \begin{pmatrix} \lambda _{u_1} & & \\ & \ddots & & \\ & & \lambda _{u_n} \end{pmatrix} A^{(q)} \begin{pmatrix} \lambda _{v_1} & & \\ & \ddots & & \\ & & \lambda _{v_n} \end{pmatrix}^{-1}
\end{equation}
(remember that our matrices act on the \emph{rows} and from the right!)

\begin{definition}\label{d:g}
The transformation~\eqref{g} of colors, as well as the corresponding transformation~\eqref{lg} of matrices, are called \emph{gauge} transformations.
\end{definition}

\subsection{Full polygon}\label{ss:fp}

\begin{definition}\label{d:pi}
Suppose that, for each d-simplex~$v$, a subset~$R_v$ of the set of all its colorings is given called \emph{permitted colorings}. Then, a coloring of a cluster of d-simplices is called \emph{permitted} if its restrictions on all d-simplices are permitted.
\end{definition}

\begin{definition}\label{d:F}
We say that a rule defining permitted colorings for d-simplices satisfies \emph{full polygon} in dimension~$\nu$ if the permitted colorings of the boundary are the same for the initial and final clusters of any $\nu$-dimen\-sional Pachner move~\eqref{kP}.
\end{definition}

In this subsection, we define permitted colorings of a d-simplex to be the same as its g-colorings (Definition~\ref{d:pc}). We denote the vertices of simplices involved in any Pachner move by numbers $1,\ldots,2n+1$.
tion: we just make a relevant permutation of numbers $1,\ldots,2n+1$. One small thing to be taken into account is that, with this permutation, colors of some faces may change their signs with respect to definition~\eqref{e_b}, because the condition $l<m$ (just below~\eqref{e_b}) may change to its reverse. The corresponding change of matrices~$A^{(q)}$ with respect to~\eqref{A-expl} is, however, just a gauge transformation, with some lambdas in~\eqref{lg} equal to unity and some to minus unity.

For a Pachner move $(n+1+k)\mapsto (n-k)$, with $k>0$, move $k$ matrices~$A^{(q)}$ from the r.h.s.\ of~\eqref{p} to the l.h.s., by multiplying both sides by their inverses $B^{(q)}=\left(A^{(q)}\right)^{-1}$, for instance, from the right:
\begin{equation}\label{gog}
A_{\mathcal B_1}^{(1)} A_{\mathcal B_3}^{(3)} \cdots A_{\mathcal B_{2n+1}}^{(2n+1)}
\cdot B_{\mathcal B_2}^{(2)} B_{\mathcal B_4}^{(4)} \cdots B_{\mathcal B_{2k}}^{(2k)} 
= A_{\mathcal B_{2n}}^{(2n)} A_{\mathcal B_{2n-2}}^{(2n-2)} \cdots A_{\mathcal B_{2k+2}}^{(2k+2)} ,
\end{equation}
then make a relevant permutation of vertices, and the gauge transformation if necessary, as we just did for move $(n+1) \mapsto n$.

Any matrix $A_{\mathcal B_q}^{(q)}$ or~$B_{\mathcal B_q}^{(q)}$ in~\eqref{gog} is still linked to any other matrix in the sense of Definition~\ref{d:l}, so its structure corresponds indeed to the move $(n+1+k)\mapsto (n-k)$. Note also that matrices~$B_{\mathcal B_q}^{(q)}$ are in fact of the very same form~\eqref{A-expl} as~$A_{\mathcal B_q}^{(q)}$, with only input and output legs interchanged, exactly as the structure of~\eqref{gog} requires.

Finally, for a Pachner move having \emph{less} d-simplices in the initial cluster than in the final cluster, just take the relation for its inverse, and interchange the l.h.s.\ and r.h.s.

This gives us the following theorem.

\begin{theorem}\label{th:F}
The full odd-gon does hold if the permitted colorings for d-simplices are as in Definition~\ref{d:pc}, or, equivalently, they are defined using matrices~\eqref{A-expl}.
\qed
\end{theorem}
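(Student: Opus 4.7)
The plan is to bootstrap from the already-proven odd-gon relation \eqref{p} (Theorem~\ref{th:p}), which handles the central move $(n+1)\mapsto n$, and show that the relation encoding every other Pachner move is a consequence of it. First I would address moves of the form $(n+1+k)\mapsto(n-k)$ with $1\le k\le n$, that is, those with \emph{more} d-simplices in the initial cluster: I would rearrange \eqref{p} by multiplying both sides by the inverses of $k$ of the right-hand-side matrices, yielding precisely \eqref{gog}. The key observation is that $B^{(q)}=\bigl(A^{(q)}\bigr)^{-1}$ has the same form as \eqref{A-expl} with the roles of input and output legs interchanged; this follows from Proposition~\ref{p:d-one} together with Proposition~\ref{p:e}, which together ensure that \emph{any} $n$ of the $2n$ faces of a d-simplex can be declared input and the corresponding transition matrix is then given by the appropriate specialization of \eqref{A-expl}.

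Next I would address the fact that, after rearrangement, the labelling of d-simplices in \eqref{gog} need not match the canonical numbering prescribed by Definition~\ref{d:B} for the move at hand. This is handled by a permutation of the vertex labels $\{1,\dots,2n+1\}$. Such a permutation may flip the sign of some components of $e_b$, because the convention $l<m$ in Definition~\ref{d:he} may reverse on certain faces; but the induced change of each $A^{(q)}$ is precisely a gauge transformation \eqref{lg} with diagonal entries $\pm 1$, which by the remarks of Subsection~\ref{ss:gt} preserves the validity of the relation. The remaining moves $(n-k)\mapsto(n+1+k)$ with $k\ge0$, which have \emph{fewer} d-simplices in the initial than in the final cluster, are then obtained for free by taking the relation already established for the inverse move and swapping its two sides.

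The part requiring care—though largely dispatched by Proposition~\ref{p:l}—is to check that after the rearrangement in \eqref{gog} the "linked" structure in the sense of Definition~\ref{d:l} still correctly encodes the combinatorics of the resulting Pachner move: every matrix must be linked to every other one, and the boundary legs of the diagram must correspond bijectively to the $(2n-2)$-faces on the common boundary of the two clusters. This is the only genuine combinatorial content; once it is verified, Proposition~\ref{p:e} says that any g-coloring of $\Delta^{2n}$ distributes consistently along the whole diagram, and the dimension count $\dim V_g(\Delta^{2n})=\tfrac{n(n+1)}{2}$ from Proposition~\ref{p:d} matches the number of free input legs on either side of \eqref{gog}. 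Therefore the two sides of \eqref{gog} agree as maps from input-boundary colorings to output-boundary colorings, and the full polygon property of Definition~\ref{d:F} holds for g-colorings.
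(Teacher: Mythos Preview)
Your proposal is correct and mirrors the paper's own argument, which is laid out in the discussion immediately preceding the theorem (the \qed indicates that no separate proof is given). One small inaccuracy worth flagging: your claim that $\dim V_g(\Delta^{2n})=\tfrac{n(n+1)}{2}$ matches the number of free input legs on \emph{either} side of~\eqref{gog} fails when $k>0$---the side with $n+1+k$ matrices has strictly more inputs, as the paper remarks just after the theorem---but this is harmless, since \eqref{gog} follows from~\eqref{p} by pure matrix algebra and the dimension argument in your final paragraph is superfluous.
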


Note that the r.h.s.\ of~\eqref{gog} may have less inputs---and outputs---than the l.h.s.\ (and it actually always happens when $k>0$). The `superfluous' output values in the l.h.s.\ coincide with the corresponding input values (because they obviously coincide in the r.h.s). Topologically, any `superfluous' input and its corresponding output belong to the \emph{same inner face} of the initial cluster, while the inputs and outputs of the r.h.s.\ form the \emph{boundary} of either initial or final cluster.

As a permitted coloring of either l.h.s.\ or r.h.s.\ of~\eqref{gog} is determined by its (arbitrary) inputs, and the l.h.s.\ has, generally, more inputs than the r.h.s., it follows that to a permitted coloring of r.h.s.\ corresponds a whole linear space of permitted colorings of l.h.s., of dimension equal to the number of `superfluous' inputs.

\section{Polygon cohomology}\label{s:coho}

\subsection{Nonconstant polygon cohomology: generalities}\label{ss:cg}

In this subsection, we define ``nonconstant polygon cohomology'' in a general context. It works equally well for both even and odd polygons, so, in this section, we speak of a ``$(\nu+2)$-gon'', where meaningful values of~$\nu$ can be $\nu=3,4,\ldots$, although, as we already stated~\eqref{nun}, we deal mostly with the odd $\nu=2n-1$ in this paper. Our $(\nu+2)$-gon corresponds to a Pachner move in a $\nu$-dimen\-sional PL manifold.

Our definition will depend on a chosen simplicial complex~$K$. In principle, $K$ can be of any dimension, although the main work in this paper will take place in the standard $(\nu+1)$-simplex~$K=\Delta^{\nu+1}$, whose boundary is the union of the l.h.s.\ and r.h.s.\ of any $\nu$-dimen\-sional Pachner move.

Suppose that every $(\nu-1)$-face~$u\subset K$ is colored by some element $\mathsf x_u\in X$ of a set~$X$ of colors (for instance, field $X=\mathcal F$ considered in the preceding sections), and that a subset~$R_v$ of permitted colorings is defined in the set of all colorings of every $\nu$-simplex~$v$.

\begin{example}\label{x:pe}
For the P-simplex~$\Delta^{\nu+1} = \Delta^{2n}$ of Section~\ref{s:r}, permitted colorings of its $\nu$-dimen\-sional faces are the restrictions of the g-coloring of Definition~\ref{d:pc} on these faces.
\end{example}

We define also the set of `permitted' colorings for any simplex $\Delta\subset K$ of any dimension.

\begin{definition}\label{d:pD}
A coloring of a simplex~$\Delta\subset K$ is \emph{permitted} provided its restrictions on all $\nu$-faces of~$\Delta$ are permitted.
\end{definition}

This agrees, of course, with our earlier Definition~\ref{d:pi}.

\begin{proposition}\label{p:pa}
Permitted colorings for P-simplex~$\Delta^{2n}$, in the situation of Section~\ref{s:r}, coincide with its g-colorings (Definition~\ref{d:pc}).
\end{proposition}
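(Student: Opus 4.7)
The inclusion $V_g(\Delta^{2n}) \subset \text{(permitted)}$ is immediate, because the restriction of each generator $e_b$ to any d-simplex $D_q = \Delta^{2n-1} \subset \Delta^{2n}$ is a g-vector for $D_q$, so every g-coloring restricts to a g-coloring on every d-simplex. The content of the proposition is the reverse inclusion: given a coloring $\mathsf{x}$ of $\Delta^{2n}$ whose restriction to every $D_q$ is a g-coloring, construct $\mathsf{y} \in V_g(\Delta^{2n})$ with $\mathsf{y} = \mathsf{x}$.

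The plan is to fix once and for all an $n$-simplex $\Delta^n = \{1,\ldots,n+1\} \subset \Delta^{2n}$ and use the basis $\{e_b : b \subset \Delta^n,\ \dim b = n-2\}$ of $V_g(\Delta^{2n})$ provided by Proposition~\ref{p:d}. The first observation is that for a face $kl$ with both $k,l \in \Delta^n$, the only basis vector with a nonzero component on $kl$ is $e_{\Delta^n \setminus \{k,l\}}$, since any other $b \subset \Delta^n$ of cardinality $n-1$ must contain $k$ or $l$. Writing $\mathsf{y} = \sum_{\{k,l\}\subset \Delta^n} \lambda_{kl}\, e_{\Delta^n \setminus \{k,l\}}$, the condition $\mathsf{y}_{kl} = \mathsf{x}_{kl}$ on these $\binom{n+1}{2}$ faces uniquely determines every $\lambda_{kl}$ by division by a nonzero product of determinants $\prod_{p \in \Delta^n \setminus \{k,l\}} d_{pkl}$, so $\mathsf{y}$ is well-defined and automatically agrees with $\mathsf{x}$ on the faces of $\Delta^n$.

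It remains to show $\mathsf{y} = \mathsf{x}$ on every other face. Here the hypothesis that $\mathsf{x}$ is permitted comes in. For a d-simplex $D_q$ with $q \in \Delta^n$, the difference $\mathsf{y}|_{D_q} - \mathsf{x}|_{D_q}$ lies in $V_g(D_q)$ and already vanishes on the $n$ faces $\{qr\}$ with $r \in \Delta^n \setminus \{q\}$ by the previous step. These $n$ faces are precisely those cut out by the $(n-1)$-simplex $\Delta^n \setminus \{q\} \subset D_q$, and the argument of Proposition~\ref{p:nr} (inspecting supports of $e_b|_{D_q}$ for $b \subset \Delta^n \setminus \{q\}$) shows that the restriction map $V_g(D_q) \to \mathcal F^n$ to these $n$ face-coordinates is injective. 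Hence $\mathsf{y}|_{D_q} = \mathsf{x}|_{D_q}$ for every $q \in \Delta^n$, giving agreement on every face $ij$ with $\{i,j\} \cap \Delta^n \ne \emptyset$.

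Finally, for faces $ij$ with $i,j \notin \Delta^n$, apply the same injectivity trick inside $D_i$: both $\mathsf{y}|_{D_i}$ and $\mathsf{x}|_{D_i}$ are g-colorings of $D_i$, and they already coincide on the $n$ faces $\{ir\}$ with $r$ ranging over any chosen $(n-1)$-simplex $\Delta^{n-1} \subset \Delta^n \subset D_i$ (by the previous paragraph, since each such $r$ lies in $\Delta^n$). Injectivity of restriction to the faces of $\Delta^{n-1}$ yields $\mathsf{y}|_{D_i} = \mathsf{x}|_{D_i}$, completing the proof. The only delicate point is the injectivity statement for the restriction $V_g(D_q) \to \mathcal F^n$, but it is a direct copy of the argument already used for Proposition~\ref{p:nr}, so no genuinely new obstacle arises.
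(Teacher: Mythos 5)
Your proof is correct, but it takes a different route from the paper's. The paper disposes of the reverse inclusion by a dimension count: a permitted coloring of $\partial\Delta^{2n}$ is determined by the $\frac{n(n+1)}{2}$ global input colors of either side of the relation~\eqref{p} (since each $A^{(q)}$ computes outputs from inputs and the diagram is acyclic), so $\dim(\text{permitted}) \le \frac{n(n+1)}{2} = \dim V_g$ by Proposition~\ref{p:d}, and the inclusion $V_g \subset (\text{permitted})$ forces equality. You instead give a constructive interpolation argument: match a g-coloring $\mathsf y$ to $\mathsf x$ on the $\binom{n+1}{2}$ faces spanned by a fixed $\Delta^n$ (where the basis $\{e_b : b\subset\Delta^n\}$ restricts diagonally with nonzero entries), then propagate agreement to all remaining faces via the injectivity of the restriction $V_g(D_q)\to\mathcal F^n$ onto the faces $qr$, $r\in\Delta^{n-1}$, which is exactly the diagonal-matrix argument of Proposition~\ref{p:nr} combined with $\dim V_g(D_q)=n$ from Proposition~\ref{p:d-one}. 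Your two-stage propagation (first to $D_q$ with $q\in\Delta^n$, then to $D_i$ with $i\notin\Delta^n$, each time seeding the $n$ needed coordinates from faces already handled) is complete and each injectivity claim checks out. What your approach buys is independence from the combinatorics of the polygon relation itself---you use only the linear-algebraic Propositions \ref{p:n}--\ref{p:d-one}---at the cost of being considerably longer; the paper's count is shorter but leans on the input/output structure of~\eqref{p} established in Subsections \ref{ss:mat}--\ref{ss:h}.
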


\begin{proof}
Clearly, any g-coloring is also permitted. On the other hand, the dimension of the space of permitted colorings is the number of inputs in either side of~\eqref{p}, which is $\frac{n(n+1)}{2}$ and coincides with the dimension of the space of g-colorings (Proposition~\ref{p:d}).
\end{proof}

For other complexes, there may be more permitted colorings than g-colorings, see Examples \ref{x:V3} and~\ref{x:V5} below.

It is implied in Definition~\ref{d:pD} that \emph{all} colorings $\mathsf x\in X$ are permitted for an individual $(\nu-1)$-face. Also, Definition~\ref{d:pD} is void of course for $\dim\Delta < \nu-1$; it has a nontrivial meaning for~$\dim\Delta > \nu$.

The set of all permitted colorings of an $m$-simplex $i_0\dots i_m$ will be denoted~$\mathfrak C_{i_0\dots i_m}$. We assume here that the vertices of any simplex are ordered: $i_0<\ldots <i_m$.

\begin{definition}\label{d:nc}
An \emph{$m$-cochain}~$\mathfrak c$ taking values in abelian group~$G$, for $m\ge \nu-1$, consists of arbitrary mappings
\begin{equation}\label{nc}
\mathfrak c_{i_0\dots i_m}\colon\;\,\mathfrak C_{i_0\dots i_m} \to G
\end{equation}
for \emph{all} $m$-simplices $\Delta^m=i_0\dots i_m\subset K$. The \emph{coboundary}~$\delta \mathfrak c$ of~$\mathfrak c$ consists of mappings $(\delta \mathfrak c)_{i_0\dots i_{m+1}}$ acting on a permitted coloring~$r\in \mathfrak C_{i_0\dots i_{m+1}}$ of $(m+1)$-simplex $i_0\dots i_{m+1}$ according to the following formula:
\begin{equation}\label{cb}
(\delta \mathfrak c)_{i_0\dots i_{m+1}} (r) = \sum _{k=0}^{m+1} (-1)^k\, \mathfrak c_{i_0\dots \widehat{i_k} \dots i_{m+1}} (r|_{i_0\dots \widehat{i_k} \dots i_{m+1}}),
\end{equation}
where each $r|_{i_0\dots \widehat{i_k} \dots i_{m+1}}$---the restriction of~$r$ onto the $m$-simplex $i_0\dots \widehat{i_k} \dots i_{m+1}$---is of course a permitted coloring of this latter simplex.
\end{definition}

\begin{definition}\label{d:ncc}
Nonconstant polygon cohomology is the cohomology of the following \emph{heptagon cochain complex}:
\begin{equation}\label{hcc}
0 \to C^{\nu-1} \stackrel{\delta}{\to} C^{\nu} \stackrel{\delta}{\to} C^{\nu+1} \stackrel{\delta}{\to} \dots\, ,
\end{equation}
where $C^m$ means the group of all $m$-cochains.
\end{definition}

\subsection{Special kinds of cohomology: polynomial, symmetric bilinear, bipolynomial}\label{ss:q}

It turns out that there are some interesting variations of the cochain definition~\eqref{nc}. For instance, preprint~\cite{cubic} (although devoted to \emph{constant hexagon} cohomology) suggests that \emph{polynomials} may be used instead of general functions~\eqref{nc}---of course, in a situation where the notion of polynomial in the variables determining a permitted coloring makes sense.

\begin{definition}\label{d:q}
We call a complex~\eqref{hcc} \emph{polynomial} if $G=F$ is the same field~$F$ that we are using for the set $X=F$ of colors, and mappings~$\mathfrak c_{i_0\dots i_m}$ in~\eqref{nc} are polynomials (i.e., depend polynomially on coordinates) on the linear spaces of permitted colorings of corresponding $m$-simplices~$i_0\ldots i_m$.
\end{definition}

We can also ``double'' the colorings and permitted colorings, introducing the Cartesian square $X\times X$ as a new color set, and Cartesian squares $R_v\times R_v$ as new sets of permitted colorings for each $\nu$-simplex~$v$. This construction will be of special interest for us in the \emph{symmetric bilinear} and \emph{bipolynomial} cases.

\begin{definition}\label{d:b}
A \emph{symmetric bilinear cochain} consists of symmetric bilinear functions
\begin{equation}\label{bc}
\mathfrak c_{i_0\dots i_m}\colon\quad \mathfrak C_{i_0\dots i_m} \times \mathfrak C_{i_0\dots i_m} \to F,
\end{equation}
of a \emph{pair}~$(r_1,r_2)$ of permitted colorings. The \emph{codifferential} for such cochains is obtained by changing $r$ to $(r_1,r_2)$ in~\eqref{cb}:
\begin{equation}\label{cb2}
(\delta \mathfrak c)_{i_0\dots i_{m+1}} (r_1,r_2) = \sum _{k=0}^{m+1} (-1)^k\, \mathfrak c_{i_0\dots \widehat{i_k} \dots i_{m+1}} (r_1|_{i_0\dots \widehat{i_k} \dots i_{m+1}},\, r_2|_{i_0\dots \widehat{i_k} \dots i_{m+1}}).
\end{equation}
\end{definition}

\begin{definition}\label{d:bp}
A \emph{bipolynomial cochain} consists of functions~\eqref{bc} that are polynomial in both $r_1$ and~$r_2$. The codifferential is of course given again by formula~\eqref{cb2}.
\end{definition}

In a characteristic $\ne 2$, symmetric bilinear cochains are of course just \emph{polarizations} of quadratic homogeneous polynomial cochains. Preprint~\cite{cubic} suggests, however, that characteristic~$2$ is interesting and deserves attention.

In any characteristic, symmetric bilinear mapping~\eqref{bc} can be treated as a \emph{scalar product} of two permitted colorings of simplex~$i_0\dots i_m$; to define a symmetric bilinear cochain is the same as to define scalar products for all corresponding simplices. This language of scalar products will be used extensively in what follows.

\section[Constructing invariant of a PL manifold from a polynomial $(2n-1)$-cocycle]{Constructing invariant of a PL manifold from a polynomial $\boldsymbol{(2n-1)}$-cocycle}\label{s:f}

\subsection{Dual parameters}\label{ss:dp}

In Section~\ref{s:r}, we parameterized g-colorings for a Pachner move by entries of matrix~$\mathcal M$~\eqref{A}. In order to consider more general PL manifolds, it makes sense to switch to a different---`dual'---parameterization. As a first step, we introduce here these dual parameters for that same P-simplex whose boundary is the union of the two sides of the Pachner move.

The three rows of matrix~$\mathcal M$ span a three-dimen\-sional linear subspace~$L$ in the $(2n+1)$-dimen\-sional space~$\mathcal L$ of $(2n+1)$-rows (over the field~$\mathcal F$~\eqref{bF}). Its \emph{orthogonal},~$L^{\perp}$, is $(2n-2)$-dimen\-sional, lies in the dual space~$\mathcal L^*$, and can be represented as spanned by the $(2n-2)$ \emph{columns} of a matrix
\begin{equation}\label{N}
\mathcal N = \begin{pmatrix} \mu _{1,1} && \dots && \mu _{1,2n-2} \\
                             \vdots && \cdots && \vdots\\
                             \mu _{2n+1,1} && \dots && \mu _{2n+1,2n-2} \end{pmatrix}
\end{equation}
Denote $p_{i_1\dots i_{2n-2}}$ the determinant made of \emph{rows} $i_1,\dots, i_{2n-2}$ of~$\mathcal N$. A well-known fact about the duality between such determinants and the determinants $d_{jkl}$~\eqref{dijk} is that~$\mathcal N$ can be normalized so that the following relation will hold:
\begin{equation}\label{pd}
p_{i_1\dots i_{2n-2}} = (-1)^{i_1+\dots +i_{2n-2}} d_{jkl},
\end{equation}
where
\begin{equation}\label{pdc}
 \begin{aligned}
  i_1<\dots <i_{2n-2},\qquad j<k<l, \\[.5ex]
  \{i_1\dots i_{2n-2}\} \cup \{j,k,l\} = \{1,\ldots, 2n+1\}.
 \end{aligned}
\end{equation}

\subsection{g-colorings in terms of dual parameters}\label{ss:gd}

We are going to give an alternative definition of $(n-2)$-simplex vectors, and formulate it in such way that it will work not only for a Pachner move (taking place in the boundary of a P-simplex), but also for more general simplicial complexes. We introduce notation~$\mathcal N_i$ which will mean, in this subsection, simply the $i$-th row of matrix~$\mathcal N$~\eqref{N} ---because here we are still in the situation of one Pachner move or, in other words, P-simplex~$\Delta^{2n}$ with its $(2n+1)$ vertices---but $\mathcal N_i$ will be a more general row in the following sections.

\begin{definition}\label{d:f}
For a face~$v$ and g-simplex $b\subset v$, we set
\begin{equation}\label{ebd}
e_b|_v = \prod_{i\in b} (-1)^{\pos\nolimits_v i} p_{v,\hat i} \, ,
\end{equation}
where $\pos\nolimits_v i$ is the position of vertex~$i\in b$ among the vertices of face~$v$ taken in the increasing order, and $p_{v,\hat i}$ means the determinant made of all rows~$\mathcal N_j$ corresponding to vertices of~$v$ except~$j=i$, taken in the increasing order of~$j$.
\end{definition}

\begin{proposition}
Definitions \ref{d:f} and~\ref{d:he} give, for~$\Delta^{2n}$, the same~$e_b|_v$.
\end{proposition}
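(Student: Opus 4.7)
The plan is to reduce to a per-vertex identity and then to sign bookkeeping via Plücker duality.

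Both definitions express $e_b|_v$ as a product over $i\in b$, so I would reduce the proposition to the single-vertex identity
$$d_{ilm} \;=\; (-1)^{\pos_v i}\,p_{v,\hat i}$$
for each $i\in b\subset v$, where $l<m$ are the two vertices of $\Delta^{2n}$ missing from $v$. Next I would apply the duality \eqref{pd} to rewrite the right-hand side: the row index set of $p_{v,\hat i}$ is $v\setminus\{i\}$, whose complement in $\{1,\dots,2n+1\}$ is exactly $\{i,l,m\}$, so
$$p_{v,\hat i} \;=\; (-1)^{s_v-i}\,d_{jkl},$$
where $(j,k,l)$ denotes the sorted arrangement of $\{i,l,m\}$ and $s_v=\sum_{k\in v}k$. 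A second step uses antisymmetry of the $3\times 3$ determinant to pass from $d_{jkl}$ to $d_{ilm}$; since $l<m$ by assumption, the necessary sign depends only on the position of $i$ relative to $\{l,m\}$, and equals $(-1)^q$ where $q$ counts the elements of $\{l,m\}$ less than $i$.

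Combining these, the claim reduces to the parity identity
$$\pos_v i \;+\; s_v - i \;+\; q \;\equiv\; 0 \pmod 2.$$
The combinatorial input is $\pos_v i = i - q$: among $\{1,\dots,i\}$ the vertices of $v=\{1,\dots,2n+1\}\setminus\{l,m\}$ are those not missing, so $i-q$ of them lie at or below $i$, and $i$ itself therefore sits at position $i-q$ in the ordered vertex list of $v$.

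The main obstacle is exactly this final sign bookkeeping—tracking the interplay between the position of $i$ within $v$, the sum $s_v$, and the permutation sign from sorting $\{i,l,m\}$ is delicate, and is where the particular normalization of $\mathcal N$ chosen in \eqref{pd} does its work. Everything else is a direct translation between the two descriptions of the same Plücker coordinates of the pair $(L,L^\perp)$ inside $\mathcal L$, so once the parity identity is verified the proposition follows automatically.
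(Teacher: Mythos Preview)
Your route is the paper's route: work factor by factor, invoke the duality~\eqref{pd}, and use the position identity $\pos_v i=i-q$, which the paper records in the form $i_k=\pos_v i_k+H(i_k-l)+H(i_k-m)$ via the Heaviside step function.

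There is, however, one loose end you should not leave as ``delicate bookkeeping.'' If you actually substitute $\pos_v i=i-q$ into your parity condition $\pos_v i+s_v-i+q\equiv 0\pmod 2$, everything cancels except~$s_v$, so you are demanding $s_v\equiv 0\pmod 2$. But $s_v=(2n+1)(n+1)-l-m$ is certainly not always even, hence the single-vertex identity $d_{ilm}=(-1)^{\pos_v i}\,p_{v,\hat i}$ does \emph{not} hold on the nose: each factor carries a residual sign $(-1)^{s_v}$. Since this sign depends only on~$v$ and not on~$i$, the full product over the $n-1$ vertices of~$b$ picks up $(-1)^{(n-1)s_v}$, and one still has to argue that this equals~$+1$. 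The paper's one-line proof is equally terse on this last point, so the subtlety is not peculiar to your write-up; but your reduction to a \emph{per-vertex} parity identity is strictly too strong and cannot be ``verified'' as written.
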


\begin{proof}
This follows from \eqref{e_b}, \eqref{ebd}, \eqref{pd}, and the fact that
\begin{equation*}
i_k = \pos\nolimits_v i_k + H(i_k-l) + H(i_k-m), 
\end{equation*}
where
\begin{equation*}
H(x)=\begin{cases}1,& x>0 \\ 0,& x\le 0 \end{cases}
\end{equation*}
is the Heaviside step function---that is, $H(i_k-l) + H(i_k-m)$ shows how many of two numbers $l$ and~$m$ precede~$i_k$, and this is exactly how many times the sign is changed in the multiplier~$d_{i_k lm}$ in~\eqref{e_b} compared with the case where $i_k$, $l$ and~$m$ go in the increasing order.
\end{proof}

\subsection{Definition of a manifold invariant as a `stable polynomial' up to linear transforms of its variables}\label{ss:i}

Let $M$ be a $(2n-1)$-dimen\-sional closed \emph{triangulated} PL manifold, and $F$ a prime field~$\mathbb F_p$ or the field~$\mathbb Q$ of rational numbers. If $\characteristic F\ne 2$, let also $M$ be \emph{oriented}.

We put in correspondence to each vertex~$i$ in the triangulation of~$M$ the following $(2n-2)$-row of parameters---indeterminates over~$F$ (analogue to a row of matrix~$\mathcal N$~\eqref{N}):
\begin{equation}\label{Ni}
\mathcal N_i = \begin{pmatrix} \mu _{i,1} && \dots && \mu _{i,2n-2} \end{pmatrix} .
\end{equation}
Additionally, if a new vertex arises as a result of a Pachner move made on the triangulation of~$M$, we put a row of the same kind~\eqref{Ni} in correspondence to it.

As usual, we think of all the vertices involved in our reasonings as numbered by natural numbers, which implies that they are ordered and gives sense to inequalities like $i_1<i_2$ for vertices $i_1$ and~$i_2$.

For our triangulation of~$M$, we use the same Definition~\ref{d:f} for g-vectors~$e_b$, understanding $p_{v,\hat i}$ of course as the determinant made of the rows~$\mathcal N_j$~\eqref{Ni} corresponding to all vertices of~$v$ except~$j=i$, taken in the increasing order of numbers~$j$. A g-coloring is, in analogy with Definition~\ref{d:pc}, any element of the linear space spanned by all~$e_b$.

\begin{definition}\label{d:pM}
A coloring of a triangulation of~$M$ is \emph{permitted} provided its restriction on any d-simplex coincides with the restriction of some g-coloring.
\end{definition}

Below, $\mathcal F$ means the field of rational functions of all~$\mu_{i,j}$ present in the rows~\eqref{Ni} corresponding to the vertices of the considered triangulation; these~$\mu_{i,j}$ are indeterminates over field $F=\mathbb Q$ or~$\mathbb F_p$. Moreover, if a new vertex arises as a result of a Pachner move, we tacitly extend~$\mathcal F$ so as to include the rational functions of new~$\mu$'s as well.

Let there be a polygon cocycle over $\mathcal F$, consisting of mappings $\mathfrak c_{\Delta^{2n-1}}$ for all $\Delta^{2n-1}\subset M$. Then $\mathfrak c_{\Delta^{2n-1}}(r)$ considered as a function of~$\Delta^{2n-1}$ is a usual cocycle---just because the coboundary~\eqref{cb}, for a fixed permitted coloring~$r$, is nothing but the usual simplicial coboundary.

For a given~$r$, consider the value
\begin{equation}\label{cM}
\sum _{\mathrm{all\;}\Delta^{2n-1}\subset M} \mathfrak c_{\Delta^{2n-1}}(r)
\end{equation}
Expression~\eqref{cM} is a polynomial function of a permitted coloring~$r$ ---that is, of the coordinates of vector~$r$ in the linear space~$V_p$ of permitted colorings w.r.t.\ some basis. 

\begin{definition}\label{d:i}
We define $I(M)$ as expression~\eqref{cM} considered as a function of the coordinates of vector~$r$ and taken up to an $\mathcal F$-linear change of these coordinates and to a `stabilization'---adding more coordinates on which \eqref{cM} does not depend, or removing such coordinates.
\end{definition}

The dimension of~$V_p$ can change under Pachner moves, so there must be no surprise that we define~$I(M)$ in such a `stable' way.

\subsection{Invariance theorem}\label{ss:ip}

\begin{theorem}\label{th:ip}
$I(M)$ is an invariant of PL manifold~$M$.
\end{theorem}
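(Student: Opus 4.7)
The plan is to reduce invariance to invariance under a single Pachner move, using Pachner's theorem that any two triangulations of~$M$ are connected by a finite sequence of such moves. Let $M$ and~$M'$ be triangulations differing by one move that replaces an initial cluster~$\mathcal C_{\mathrm{init}}$ of $\kappa$ d-simplices by a final cluster~$\mathcal C_{\mathrm{final}}$ of $2n+1-\kappa$ d-simplices; by construction $\mathcal C_{\mathrm{init}}\cup\mathcal C_{\mathrm{final}}$ is the boundary of a single P-simplex~$\Delta^{2n}$. Splitting the sum~\eqref{cM} for each triangulation as the contribution of the d-simplices outside the move plus the local contribution $S_{\mathrm{init}}(r) := \sum_{\Delta\in\mathcal C_{\mathrm{init}}}\mathfrak c_\Delta(r|_\Delta)$ or $S_{\mathrm{final}}(r')$, and noting that by Theorem~\ref{th:F} (full polygon) any permitted coloring of~$M$ can be matched with one of~$M'$ agreeing on the rest of the manifold and on the shared boundary $\partial\mathcal C$, reduces the problem to comparing $S_{\mathrm{init}}$ and $S_{\mathrm{final}}$.

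The heart of the argument is to apply the cocycle identity $\delta\mathfrak c=0$ to the P-simplex $\Delta^{2n}=i_0\ldots i_{2n}$ (with $i_0<\ldots<i_{2n}$) on a g-coloring~$r$ of its boundary, which by Proposition~\ref{p:pa} is the same as a permitted coloring of~$\partial\Delta^{2n}$:
\begin{equation*}
\sum_{k=0}^{2n}(-1)^k\,\mathfrak c_{\Delta^{2n}\setminus i_k}\bigl(r|_{\Delta^{2n}\setminus i_k}\bigr)=0.
\end{equation*}
Partitioning the $2n+1$ terms according to whether $\Delta^{2n}\setminus i_k$ belongs to $\mathcal C_{\mathrm{init}}$ or $\mathcal C_{\mathrm{final}}$, and aligning the vertex-ordering signs $(-1)^k$ with the incidence signs of these d-simplices as facets of $\partial\Delta^{2n}$ inherited from the global orientation of~$M$, the identity becomes $S_{\mathrm{init}}(r)=S_{\mathrm{final}}(r)$. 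In characteristic~$2$ the signs are irrelevant; otherwise the orientation hypothesis on~$M$ provides the needed compatibility.

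It remains to upgrade this pointwise equality to equivalence in the sense of Definition~\ref{d:i}. The spaces $V_p(M)$ and $V_p(M')$ both surject (by full polygon) onto the space of permitted colorings of the common manifold piece together with the shared cluster boundary; their kernels are the spaces of \emph{superfluous interior inputs} noted after Theorem~\ref{th:F}, whose dimensions generally differ for $\mathcal C_{\mathrm{init}}$ and $\mathcal C_{\mathrm{final}}$. Because both $S_{\mathrm{init}}$ and $S_{\mathrm{final}}$ depend only on the boundary data---independence from the superfluous-input directions follows by reapplying the cocycle identity to the smaller Pachner moves implicit inside each cluster---these excess coordinates enter $\mathcal S(M,\cdot)$ only as inert parameters. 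Composing with the $\mathcal F$-linear identification of the two parameterizations of the common-boundary space turns $\mathcal S(M,\cdot)$ into $\mathcal S(M',\cdot)$ by a linear change of coordinates followed by addition/removal of inert coordinates, which is exactly the equivalence of Definition~\ref{d:i}. When the move introduces a new vertex, $\mathcal F$ is tacitly enlarged, as already allowed in Subsection~\ref{ss:i}.

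The main obstacle I anticipate is the sign and orientation bookkeeping: the factors $(-1)^k$ in~\eqref{cb} come from the vertex order inside~$\Delta^{2n}$, while the alternating signs that naturally appear on the Pachner cluster come from the ambient orientation of~$M$; reconciling these so that the cocycle identity reads $S_{\mathrm{init}}-S_{\mathrm{final}}=0$ on the nose, rather than up to a spurious sign on some d-simplex, is the reason for the orientation hypothesis in characteristic $\ne 2$. A secondary technical point is to verify rigorously that $S_{\mathrm{init}}$ and $S_{\mathrm{final}}$ are indeed independent of the interior superfluous inputs in general $n$; this should follow by a recursion on the size of the cluster, applying the same cocycle identity to the smaller P-simplices hidden inside, but deserves to be spelled out carefully.
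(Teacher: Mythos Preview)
Your approach is essentially the paper's: reduce to one Pachner move, apply the cocycle identity on the P-simplex to get $S_{\mathrm{init}}=S_{\mathrm{final}}$, then argue that the two coordinate systems on~$V_p$ differ only by a linear change plus addition/removal of the ``superfluous'' interior inputs. The orientation/sign bookkeeping you flag is exactly what the paper addresses in one line (the induced orientations on the two clusters from~$\Delta^{2n}$ are opposite to their orientations as parts of the triangulation of~$M$), and your identification of this as the reason for the orientation hypothesis in $\characteristic\ne 2$ is correct.

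The one place where you overcomplicate matters is the ``secondary technical point''. You propose to show that $S_{\mathrm{init}}$ is independent of the superfluous interior inputs by a recursion on smaller Pachner moves hidden inside the cluster; no such recursion is needed (and it is not clear what those smaller moves would be). The independence is immediate from what you have already proved: the equality $S_{\mathrm{init}}(r)=S_{\mathrm{final}}(r)$ holds for \emph{every} permitted coloring~$r$ of~$\partial\Delta^{2n}$, and $S_{\mathrm{final}}$ manifestly depends only on the restriction of~$r$ to~$\mathcal C_{\mathrm{final}}$, hence only on the boundary data and not on the extra interior inputs of~$\mathcal C_{\mathrm{init}}$. Therefore $S_{\mathrm{init}}$ cannot depend on them either. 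This is how the paper disposes of the issue in its third paragraph.
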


In particular, $I(M)$ does not depend on the indeterminates entering matrix~$\mathcal N$~\eqref{N}.

\begin{proof}
We must show that $I(M)$ does not change under a Pachner move. Recall that a Pachner move replaces a cluster~$C_{\mathrm{ini}}$ of d-simplices with another cluster~$C_{\mathrm{fin}}$ in such way that these clusters form together~$\partial\Delta^{2n}$---the boundary of a P-simplex.

First, we can extend any permitted coloring onto~$C_{\mathrm{fin}}$ (glued to~$M$ by the boundary $\partial C_{\mathrm{fin}} = \partial C_{\mathrm{ini}}$). This can be seen from~\eqref{gog} and the fact that permitted colorings are, for a single d-simplex, the same as g-colorings. Hence, permitted colorings are parameterized by the input vector of the corresponding side of~\eqref{gog}, and these vectors are either the same or one of them is a direct sum of the other with the vector of `superfluous inputs', according to what we remarked after Theorem~\ref{th:F}---in any case, there is no problem to either remove or add these `superfluous inputs'.

Second, we are dealing with a polygon cocycle, hence
\begin{equation}\label{wD}
\pm\! \sum _{\Delta^{2n-1}\subset \Delta^{2n}}\! \mathfrak c_{\Delta^{2n-1}}(r)\; = \sum _{\Delta^{2n-1}\subset C_{\mathrm{ini}}}\! \mathfrak c_{\Delta^{2n-1}}(r)\; - \! \sum _{\Delta^{2n-1}\subset C_{\mathrm{fin}}}\! \mathfrak c_{\Delta^{2n-1}}(r) = 0.
\end{equation}
The minus sign in the middle part of~\eqref{wD} is due to the fact that the mutual orientation of the two clusters induced by an orientation of~$\Delta^{2n}$ is \emph{opposite} to their mutual orientation in the situation when one of them replaces the other within a triangulation of~$M$. Hence, expression~\eqref{cM} remains the same under a Pachner move.

Third, take for coordinates of vector~$r$ the inputs of our general odd-gon relation~\eqref{gog} corresponding to this move, plus the colors of some other faces linearly independent of these inputs (thus lying outside the move). One side of~\eqref{gog} may differ from the other by some `free' inputs (coinciding, due to the same relation~\eqref{gog}, with their outputs) not influencing~\eqref{cM}. This is exactly what is required for~\eqref{cM} to remain the same function of coordinates of~$r$ up to their linear change and `stabilization'.
\end{proof}

\subsection{Invariant on the factor space of permitted colorings modulo g-colorings}\label{ss:f}

In this subsection, we work in the bipolynomial setting, that is, with a double coloring $r=(r_1,r_2)$.

\begin{proposition}\label{p:t}
In a bipolynomial case, \eqref{cM} does not change if one adds a g-coloring to either\/~$r_1$ or~$r_2$.
\end{proposition}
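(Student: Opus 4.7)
The plan is to reduce the bipolynomial statement to a multilinear one by polarization, and then exploit the cocycle property $\delta\mathfrak{c}=0$. Since $\mathfrak{c}_v$ is polynomial in its first argument,
\[
\mathfrak{c}_v(r_1+g, r_2)-\mathfrak{c}_v(r_1, r_2)=\sum_{k\geq 1}\mathfrak{c}_v^{[k]}(r_1;\underbrace{g,\ldots,g}_{k};r_2)
\]
decomposes into homogeneous pieces of positive degree $k$ in $g$; after polarizing and using linearity $g=\sum_b\lambda_b e_b$, it suffices to show that for every $k\geq 1$ and every tuple $b_1,\ldots,b_k$ of g-simplices,
\[
\sum_v \mathfrak{c}_v^{[k]}(r_1;e_{b_1},\ldots,e_{b_k};r_2)=0,
\]
where $\mathfrak{c}_v^{[k]}$ is the symmetric multilinear polarization of $\mathfrak{c}_v$ in its first argument. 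The case $k=1$ is the decisive one; larger $k$ follow by the same scheme, iterated.

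For fixed $r_1,r_2$ and all but one slot of the $e_{b_j}$'s held constant, the map $v\mapsto\mathfrak{c}_v^{[k]}(\cdots)$ is a numerical simplicial $(2n-1)$-cochain on $M$; and extracting a multilinear coefficient is a linear operation that commutes with the simplicial coboundary, so $\delta\mathfrak{c}=0$ passes to $\delta\mathfrak{c}^{[k]}=0$. Thus the sum we want to kill is the pairing of a simplicial cocycle with the fundamental class~$[M]$. By locality, $e_b|_v=0$ whenever $b\not\subset v$, so this cocycle is supported in the closed stars of the $b_j$'s; combining this with the three-term relations of Proposition \ref{p:r} (which express any $e_b$ in terms of $e_b$'s based on vertices of a fixed subsimplex), I would rewrite the cocycle as a simplicial coboundary, whose evaluation on $[M]$ is zero.

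The main obstacle is exactly this last coboundary step: since $\dim M=2n-1$, there are no intrinsic $(2n)$-simplices of $M$ on which to apply $\delta\mathfrak{c}=0$ directly. My plan to circumvent this is to invoke Theorem \ref{th:ip}: via Pachner moves, reduce $M$ to a triangulation in which the closed star of each $b_j$ fits inside a standard P-simplex configuration, where the full polygon relation \eqref{gog} and $\delta\mathfrak{c}=0$ apply; the local coboundary expressions produced there are then glued along common faces, using Proposition \ref{p:r} to handle overlaps between stars of different~$b_j$. The $r_2$-side of the claim follows by the same argument, applied with the roles of the two polynomial slots interchanged.
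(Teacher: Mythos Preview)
Your polarization detour is both unnecessary and, as written, breaks in positive characteristic---which is precisely the setting of the paper's main applications (Section~\ref{s:2n-2}). In characteristic~$p$ the symmetric multilinear polarization of a degree-$k$ polynomial need not recover the polynomial (think of $g\mapsto g^{p}$, whose polarization vanishes), so the passage from ``degree-$k$ piece in~$g$'' to ``multilinear in the tuple $(e_{b_1},\ldots,e_{b_k})$'' does not go through. The paper's reduction is simpler and characteristic-free: write the g-coloring as a finite sum $g=\sum_j \lambda_j e_{b_j}$ and \emph{telescope},
\[
\mathfrak c_v(r_1+g,r_2)-\mathfrak c_v(r_1,r_2)
=\sum_j\Bigl[\mathfrak c_v\bigl(r_1+\textstyle\sum_{i\le j}\lambda_i e_{b_i},\,r_2\bigr)
-\mathfrak c_v\bigl(r_1+\textstyle\sum_{i<j}\lambda_i e_{b_i},\,r_2\bigr)\Bigr],
\]
so it suffices to treat a single increment $\lambda e_b$ (with $r_1$ replaced by a shifted permitted coloring). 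One $b$ at a time; no multilinear expansion, no overlapping stars of several $b_j$'s to glue.

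For the single-$e_b$ step you and the paper agree on the key observation: the change in $\mathfrak c_{\Delta^{2n-1}}$ is supported on $\operatorname{star}(b)$, a PL ball, and $e_b$ vanishes on the boundary faces of that ball. The paper then says in one line ``hence this change is a coboundary and adds nothing to~\eqref{cM}''; your Pachner-move idea is a reasonable way to unpack that clause (the two permitted colorings agree on $\partial(\operatorname{star} b)$, and the polygon cocycle condition across Pachner moves inside the ball forces the two sums over the ball to coincide). The three-term relations of Proposition~\ref{p:r} are a red herring here: after the telescoping reduction only one~$b$ is in play, so there is no need to express one $e_b$ through others.
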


\begin{proof}
Enough to consider the g-coloring generated by \emph{one} g-simplex~$b$. Cocycle $\mathfrak c_{\Delta^{2n-1}}(r_1,r_2)$ changes only locally, in a neighborhood of~$b$ which is topologically just a ball. Hence, this change is a coboundary, and adds nothing to~\eqref{cM}.
\end{proof}

In view of Proposition~\ref{p:t}, it is natural to consider the factor space
\begin{equation}\label{V}
V = V_p / V_g,
\end{equation}
where $V_p$ and~$V_g$ mean linear spaces of permitted and g-colorings, respectively.

\begin{proposition}\label{p:V}
The dimension of factor space~$V$ is a manifold invariant.
\end{proposition}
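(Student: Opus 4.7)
The plan is to prove invariance of $\dim V$ under each Pachner move; by Pachner's theorem (cited in Subsection~\ref{ss:m}) this reduces the problem to local analysis. Fix triangulations $T_1,T_2$ of $M$ differing by a move that replaces a cluster $C_{\mathrm{ini}}$ by $C_{\mathrm{fin}}$, with common boundary $B=\partial C_{\mathrm{ini}}=\partial C_{\mathrm{fin}}$ and unchanged complement $K$. Rather than compute $\dim V_p$ and $\dim V_g$ in isolation (both change, and tracking them individually is cumbersome), I plan to construct an explicit linear isomorphism $\phi\colon V(T_1)\to V(T_2)$, thereby establishing equality of dimensions via a canonical identification.

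Given a representative $r_1\in V_p(T_1)$, the full odd-gon relation~\eqref{gog} from Theorem~\ref{th:F} ensures that the boundary coloring $r_1|_B$ extends to some permitted coloring $\tilde r_2$ on $C_{\mathrm{fin}}$; gluing $\tilde r_2$ to $r_1|_K$ along $B$ produces $r_2\in V_p(T_2)$, and I would set $\phi([r_1]):=[r_2]$. Two extensions $\tilde r_2,\tilde r_2'$ differ by an element $\delta r\in V_p(C_{\mathrm{fin}})$ vanishing on $B$, and extending $\delta r$ by the zero coloring over $C_{\mathrm{ini}}$ yields a permitted coloring of the ambient P-simplex $\partial\Delta^{2n}$ (using that $0$ lies in any linear space of permitted colorings of a single d-simplex). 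By Proposition~\ref{p:pa}, this extension equals a g-coloring $\sum_b\lambda_b e_b$ of $\partial\Delta^{2n}$; since $e_b|_v=0$ unless $b\subset v$, only terms with $b$ present as a g-simplex in $T_2$ survive on restricting back to $C_{\mathrm{fin}}$, giving $\delta r\in V_g(T_2)$. A completely analogous argument applied to a $T_1$-g-coloring added to $r_1$ shows that $\phi$ does not depend on the representative; and by the symmetry of the construction, the corresponding map in the opposite direction is a two-sided inverse of $\phi$. Hence $\phi$ is an isomorphism and $\dim V(T_1)=\dim V(T_2)$.

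The main obstacle is the bookkeeping for the extremal moves $1\leftrightarrow 2n$, in which a vertex of $M$ (together with its $\mu$-parameters) is created or destroyed, so the field $\mathcal F$ itself grows or shrinks between the two triangulations. The remedy is to work over a common field extension containing the $\mu$-parameters of both sides; Proposition~\ref{p:pa} applies on the P-simplex whose full $(2n+1)$-vertex set is present in whichever triangulation contains the extra vertex, and for any g-simplex $b$ containing that vertex the vector $e_b$ vanishes identically on faces of the opposite triangulation (again by the support property $e_b|_v=0$ for $b\not\subset v$). Hence the decomposition $\sum_b\lambda_b e_b$ restricts to a legitimate g-coloring of each triangulation individually, and $\dim V$ is unchanged under the field extension, which closes the argument.
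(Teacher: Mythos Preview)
Your isomorphism approach is different from the paper's dimension count, and it would in principle give more (a canonical identification, not just equality of dimensions). But there is a genuine gap at the step ``giving $\delta r\in V_g(T_2)$''. What you have shown, via Proposition~\ref{p:pa} and the support property, is that $\delta r|_{C_{\mathrm{fin}}}$ coincides with the restriction to~$C_{\mathrm{fin}}$ of a P-simplex g-coloring $\sum_b\lambda_b e_b$, and that only terms with $b\subset C_{\mathrm{fin}}$ contribute to that restriction. What you \emph{need} is that the full coloring $r_2-r_2'$ of~$T_2$---equal to~$\delta r$ on~$C_{\mathrm{fin}}$ and to~$0$ on~$K$---lies in $V_g(T_2)$. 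The obvious candidate $\sum_{b\in C_{\mathrm{fin}}}\lambda_b\,e_b^{T_2}$ agrees with $r_2-r_2'$ on~$C_{\mathrm{fin}}$, but for a g-simplex $b\subset B$ the global vector $e_b^{T_2}$ has nonzero components on faces of~$K$ containing~$b$, so this candidate need not vanish on~$K$. The ``completely analogous'' argument for independence from the representative~$r_1$ runs into the same obstruction, and so does injectivity of~$\phi$ (or well-definedness of the inverse map).

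The missing ingredient is precisely what the paper's own proof supplies: a permitted coloring of the \emph{larger} cluster that vanishes on~$B$ can be written as a combination of~$e_b$ for g-simplices~$b$ that are \emph{interior} to that cluster---equivalently, those~$b$ containing the central $(n-k-1)$-simplex~$\sigma$. For such~$b$ the vector~$e_b^{T_2}$ vanishes identically on~$K$, and your extension-by-zero is then a genuine element of~$V_g(T_2)$. The paper obtains this by matching dimensions: the ambiguity space has dimension $\frac{k(k+1)}{2}$ (the superfluous inputs in~\eqref{gog}), and Proposition~\ref{p:n} applied to an $n$-simplex $\tau\supset\sigma$ exhibits $\frac{k(k+1)}{2}$ linearly independent interior~$e_b$. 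With that lemma in hand your isomorphism argument would go through; without it, neither well-definedness nor bijectivity of~$\phi$ is established.
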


\begin{proof}
As we remarked after Theorem~\ref{th:F}, the l.h.s.\ of~\eqref{gog} has the same inputs and outputs as the r.h.s.\ plus, generally, some more inputs coinciding with their corresponding outputs. For the l.h.s., coordinates in~$V_p$ can be chosen the same as in the r.h.s.\ plus the values on those latter inputs.

The same applies to coordinates in~$V_g$, but there is one thing here that must be checked: that all the mentioned additional inputs can be made arbitrary \emph{by choosing a proper linear combination of g-vectors~$e_b$}, and this must be done independently of the other coordinates. There are $\frac{k(k+1)}{2}$ of these additional inputs (\,= outputs) for move $(n+1+k) \mapsto (n-k)$, and they belong to \emph{inner} faces of the initial d-simplex cluster, while all the other inputs and outputs belong to the boundary faces. So, it is enough to show that the g-vectors corresponding to \emph{inner g-simplices~$b$} generate a $\frac{k(k+1)}{2}$-dimen\-sional space of colorings---all being, of course, colorings of only inner faces.

The d-simplices in the initial cluster form the star of an inner $(n-k-1)$-simplex; denote this latter as~$\sigma$. Consider the simplex having all vertices of~$\sigma$ plus $(k+1)$ other vertices of the initial cluster, these latter chosen arbitrarily; this will be an $n$-simplex, denote it~$\tau$. According to Proposition~\ref{p:n}, vectors~$e_b$ for \emph{all} $b\subset \tau$ are linearly independent, and so are, then, vectors~$e_b$ for $\sigma \subset b\subset \tau$. As one can see, all such~$b$ are, first, inner g-simplices, and second, there are exactly $\frac{k(k+1)}{2}$ of them.
\end{proof}

Factor space~$V$~\eqref{V} can indeed be nontrivial, as the two following examples show that are checked by direct calculations.

\begin{example}\label{x:V3}
For $M=S^1 \times S^2$ and $F=\mathbb Q$, or~$\mathbb F_2$, or~$\mathbb F_3$, or~$\mathbb F_5$, the dimension of linear space~$V$ is two.
\end{example}

\begin{example}\label{x:V5}
For $M=S^2 \times S^3$ and $F=\mathbb Q$, or~$\mathbb F_2$, or~$\mathbb F_3$, or~$\mathbb F_5$, the dimension of linear space~$V$ is six.
\end{example}

\begin{proposition}\label{p:fa}
Invariant~$I(M)$ depends only on the equivalence classes of two permitted colorings modulo g-colorings, that is, of two elements of the factor space~$V$.
\end{proposition}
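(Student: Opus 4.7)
The plan is to exploit Proposition~\ref{p:t} directly. That proposition says that the polynomial
\begin{equation*}
\sum_{\Delta^{2n-1}\subset M}\mathfrak{c}_{\Delta^{2n-1}}(r_1,r_2)
\end{equation*}
is unchanged when a g-coloring is added to either argument. Hence, as a function on $V_p\times V_p$, it is constant along cosets of $V_g\times V_g$ and therefore factors uniquely through the quotient map $V_p\times V_p\to V\times V$. So at least as a function of the actual vectors, the sum~\eqref{cM} depends only on the pair of classes in~$V$.

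Next, to match this with the invariant~$I(M)$ as defined in Definition~\ref{d:i}, I would pick a linear complement $W\subset V_p$ to~$V_g$, so that $V_p=V_g\oplus W$ with $W\cong V$, and choose coordinates $(g_1,\dots,g_k;\,w_1,\dots,w_\ell)$ on~$V_p$ with the $g_j$'s coordinatising~$V_g$ and the $w_j$'s coordinatising~$W$. Doing this for both copies, the function~\eqref{cM} depends polynomially only on the $w$-coordinates of~$r_1$ and~$r_2$, and not at all on the $g$-coordinates. By the stabilization clause in Definition~\ref{d:i}, these unused $g$-coordinates may be discarded, and what remains is a polynomial function on~$V\times V$, namely the induced polynomial function on the quotient space.

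Finally, to see that this induced object is independent of the auxiliary choices, note that any two complements $W,W'$ to~$V_g$ in~$V_p$, or any two bases of~$V$, are related by an $\mathcal{F}$-linear change of coordinates on~$V_p\times V_p$, which is exactly the equivalence relation already built into Definition~\ref{d:i}. Combining with Theorem~\ref{th:ip} and the invariance of~$\dim V$ (Proposition~\ref{p:V}), we get that $I(M)$ is well-defined as a polynomial function on~$V\times V$, taken up to $\mathcal{F}$-linear changes of coordinates on~$V$.

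The only genuine thing to verify is the compatibility of this bipolynomial setup with the stabilization procedure, which was stated in Definition~\ref{d:i} in the single-coloring notation: one must check that adding/removing coordinates on which~\eqref{cM} does not depend makes sense for both arguments $r_1$ and~$r_2$ independently; but this is nothing more than applying the definition to each factor in the Cartesian square, so no real obstacle arises.
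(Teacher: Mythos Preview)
Your proof is correct and follows the same approach as the paper: the paper's entire proof is the single line ``Follows from Proposition~\ref{p:t}.'' Your first paragraph is exactly that argument; the remaining paragraphs spell out in detail how the conclusion interacts with Definition~\ref{d:i} (choice of complement to~$V_g$, stabilization, independence of coordinates), which the paper leaves implicit.
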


\begin{proof}
Follows from Proposition~\ref{p:t}.
\end{proof}

Hence, we can consider~$I(M)$ as a bipolynomial on~$V$, taken up to linear automorphisms of~$V$.

\section[Symmetric bilinear $(2n-2)$-cocycle and how it yields bipolynomial $(2n-1)$-cocycles]{Symmetric bilinear $\boldsymbol{(2n-2)}$-cocycle and how it yields bipolynomial $\boldsymbol{(2n-1)}$-cocycles}\label{s:2n-2}

\subsection[Symmetric bilinear $(2n-2)$-cocycle]{Symmetric bilinear $\boldsymbol{(2n-2)}$-cocycle}\label{ss:bc}

In this section, we are working within the simplicial complex $K=\Delta^{2n}$, with vertices $1,\ldots,2n+1$, and even within its boundary~$\partial\Delta^{2n}$ which consists of $2n+1$ simplices of dimension~$(2n-1)$ and is the union of the two parts of a $(2n-1)$-dimen\-sional Pachner move.

Let there be two colorings of~$K$, and let $x_{iq}$ and~$y_{iq}$ denote the corresponding colors of face~$iq$. Recall that $iq$ means, in this context, the $(2n-2)$-dimen\-sional face containing all vertices $1,\ldots,2n+1$ \emph{except} $i$ and~$q$. Its \emph{orientation} will be important now, so, by definition, if $i<q$, then the orientation of~$iq$ is determined by the increasing order of its vertices, and is opposite to that if $q<i$.

Similarly, orientation of a d-simplex~$q$ will also be determined by the increasing order of its vertices.

The following symbol~$\epsilon _i^{(q)}=\pm 1$ is the incidence number between oriented cells $q$ and~$iq$:
\begin{equation}\label{eps_ip}
\epsilon _i^{(q)} \stackrel{\mathrm{def}}{=} \begin{cases} (-1)^{i} & \text{if}\:\; i<q \\
                                                           -(-1)^{i} & \text{if}\;\; i>q \end{cases}
\end{equation}
Additionally, one more symbol~$\eta _i^{(p)}=\pm 1$ will be of use:
\begin{equation}\label{eta_ip}
\eta _i^{(q)} \stackrel{\mathrm{def}}{=} \begin{cases} (-1)^{i-q} & \text{if}\:\; i<q \\
                                                           -(-1)^{i-q} & \text{if}\;\; i>q \end{cases}
\end{equation}

\begin{proposition}\label{p:2n-2}
The cochain consisting of mappings (compare~\eqref{bc})
\begin{equation}\label{2n-2}
(x_{iq}, y_{iq}) \mapsto c_{iq\,} x_{iq} y_{iq}
\end{equation}
for each face~$iq$, where
\begin{equation}\label{cip}
c_{iq} = \frac{\eta _i^{(q)}}{\prod_{j \ne i,q} d_{j iq}},
\end{equation}
is a bilinear $(2n-2)$-cocycle.
\end{proposition}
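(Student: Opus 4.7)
The plan is to verify $\delta\mathfrak{c}=0$ d-simplex by d-simplex. Fix a d-simplex $q$; by bilinearity of the scalar product and by Proposition~\ref{p:nr}, it suffices to check that
$$
\sum_{i\neq q}\epsilon_i^{(q)}\,c_{iq}\,e_{b_1}|_{iq}\,e_{b_2}|_{iq}=0
$$
for every pair of g-simplices $b_1,b_2\subset\Delta^{n-1}$, where $\Delta^{n-1}$ is some fixed $(n-1)$-simplex not containing $q$.

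A short case check on $i\lessgtr q$ using \eqref{eps_ip} and \eqref{eta_ip} shows that $\epsilon_i^{(q)}\eta_i^{(q)}=(-1)^q$, independently of $i$, so the two sign factors collapse into an overall constant. Next, $e_b|_{iq}$ vanishes whenever $i\in b$, so only indices $i\in C:=\{1,\dots,2n+1\}\setminus(B\cup\{q\})$ contribute, where $B:=b_1\cup b_2$. Using $e_b|_{iq}=\pm\prod_{j\in b}d_{jiq}$ from Definition~\ref{d:he} (the extra sign when $i>q$ enters to the even power $|b_1|+|b_2|=2(n-1)$ in the product $e_{b_1}|_{iq}\,e_{b_2}|_{iq}$, so it cancels), one now cancels the factors $\prod_{j\in B}d_{jiq}$ in the numerator against the denominator $\prod_{j\neq i,q}d_{jiq}$, and the cocycle identity reduces to
$$
\sum_{i\in C}\frac{\prod_{j\in A}d_{jiq}}{\prod_{j\in C,\,j\neq i}d_{jiq}}=0,\qquad A:=b_1\cap b_2.
$$
The decisive combinatorial observation is $|C|=|A|+2$, which follows directly from $|b_1|=|b_2|=n-1$.

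The remaining identity is a classical partial-fractions vanishing. By the $GL_3$-equivariance of all the $d_{ijk}$, one may assume without loss of generality that $\mathcal M_q=(0,0,1)^{\top}$; then $d_{jiq}=\alpha_j\beta_i-\alpha_i\beta_j=\beta_i\beta_j(z_j-z_i)$ with $z_k:=\alpha_k/\beta_k$, and the $\beta$-prefactors pull out of the sum as an $i$-independent multiplicative constant. The residual sum takes the form $\sum_k f(z_{i_k})/g'(z_{i_k})$ with $f(z):=\prod_{j\in A}(z_j-z)$ and $g(z):=\prod_{t\in C}(z-z_{i_t})$; since $\deg f=|A|=|C|-2<\deg g-1$, the rational function $f/g$ decays like $z^{-2}$ at infinity, so the $z^{-1}$ coefficient of its partial-fraction expansion---which equals $\sum_k f(z_{i_k})/g'(z_{i_k})$---must vanish.

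The main technical obstacle I anticipate is the sign bookkeeping in the middle step: the orientations of $iq$ with $i\lessgtr q$, the definitions \eqref{eps_ip} and \eqref{eta_ip}, and the column-order convention in the denominator of $c_{iq}$ must all conspire to leave a sign-uniform sum over $i\in C$. Once the signs are shown to collapse into a single overall $(-1)^q$, the algebraic content of the proposition reduces to this one classical identity.
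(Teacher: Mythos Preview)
Your argument is correct; the sign bookkeeping you flag as the main worry does go through: $\epsilon_i^{(q)}\eta_i^{(q)}=(-1)^q$ in both cases $i\lessgtr q$, the extra $(-1)^{n-1}$ coming from Definition~\ref{d:he} when $i>q$ enters squared in $e_{b_1}|_{iq}\,e_{b_2}|_{iq}$, and after the $GL_3$-normalization the residual $\beta$-powers combine to an $i$-independent factor because $|A|-|C|+1=-1$ is exactly compensated by the $\beta_i$ appearing in $\prod_{j\in C}\beta_j$. The partial-fractions step is the standard $\sum_k f(z_{i_k})/g'(z_{i_k})=0$ for $\deg f\le\deg g-2$.

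Your route, however, differs from the paper's. The paper does not fix a basis in~$\Delta^{n-1}$ and does not invoke any rational-function identity. Instead it first treats the extreme case of two \emph{disjoint} g-simplices $b_1,b_2$ (so $|A|=0$, $|C|=2$): then only two summands survive and their cancellation is read off directly from \eqref{cip} and~\eqref{e_b}. The general case is then obtained by moving $b_1$ and $b_2$ around using the three-term relations~\eqref{lr_2n-1}, which are bilinear in the scalar product. Your approach handles all pairs $b_1,b_2\subset\Delta^{n-1}$ uniformly at the price of a (classical) analytic identity; the paper's approach is purely combinatorial and makes the two-term cancellation transparent, but leaves the ``suitable chain of three-term dependences'' as an exercise. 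Either way the key numerical coincidence is the same: two more vertices in~$C$ than in~$A$.
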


Note that \eqref{cip} yields $c_{iq}=c_{qi}$, because both the numerator and denominator change their signs under $i \leftrightarrow q$; the denominator because it is the product going over the $(2n-1)$ vertices---an odd number---of face~$iq$.

\medskip

We introduce the following \emph{scalar product} of two arbitrary colorings $x$ and~$y$ of a $(2n-1)$-simplex~$q$:
\begin{equation}\label{xy_2n-2}
\langle x, y \rangle_{2n-2}^{(q)} = \sum _{i\ne q} \epsilon_i^{(q)} c_{iq}\,x_{iq}y_{iq} \, .
\end{equation}
For \emph{permitted} $x$ and~$y$ this is, of course, just the coboundary of~\eqref{2n-2} for $(2n-1)$-simplex~$q$, hence, what we must prove is that \eqref{xy_2n-2} vanishes for permitted $x$ and~$y$.

Pay attention to the subscript~$2n-2$ in the l.h.s.\ of~\eqref{xy_2n-2}: it emphasizes that we are dealing with $(2n-2)$-cochains, and serves to distinguish this scalar product from another one introduced below in~\eqref{q5}.

\begin{proof}[Proof of Proposition~\ref{p:2n-2}]
Take, first, $(n-2)$-simplex vectors for two \emph{non-inter\-secting} simplices $\Delta_1^{n-2}$ and~$\Delta_2^{n-2}$ as $x$ and~$y$. There are just two faces of~$q$ containing both $\Delta_1^{n-2}$ and~$\Delta_2^{n-2}$, hence, there are two nonvanishing summands in the l.h.s.\ of~\eqref{xy_2n-2}, and one can see, using explicit expressions \eqref{cip} and~\eqref{e_b}, that they cancel each other.

Then, it is not hard to see that if we use a suitable chain of three-term linear dependences~\eqref{lr_2n-1}, we can show that \eqref{xy_2n-2} vanishes for \emph{any} $(n-2)$-simplex vectors $x$ and~$y$ and hence for any permitted $x$ and~$y$.
\end{proof}

\begin{proposition}\label{p:nm}
There are no $(2n-2)$-cocycles linearly independent from cocycle given by \eqref{2n-2} and~\eqref{cip}.
\end{proposition}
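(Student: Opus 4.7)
A $(2n-2)$-cochain in this symmetric bilinear setting is simply an assignment of a scalar $c_{iq}\in\mathcal{F}$ to each $(2n-2)$-face $iq$, since the color space of a single face is one-dimensional and a symmetric bilinear form on it is a single scalar. The cocycle condition on a d-simplex~$q$ reads
\[
B_q(x,y) \;=\; \sum_{i\ne q} \epsilon_i^{(q)}\, c_{iq}\, x_{iq} y_{iq} \;=\; 0 \qquad \text{for all } x,y\in V_g(q).
\]
So the plan is to show that the linear system in $(c_{iq})$ obtained by imposing this for every d-simplex~$q$ has a one-dimensional solution space, which must then be spanned by~\eqref{cip}.

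First I would establish \emph{local} rigidity on one d-simplex~$q$. For any two distinct vertices $i_1,i_2\in\{1,\ldots,2n+1\}\setminus\{q\}$, the remaining $2(n-1)$ vertices can be partitioned into two disjoint $(n-1)$-element sets, which we take as the vertex sets of non-intersecting $(n-2)$-simplices $b_1,b_2$. As observed in the proof of Proposition~\ref{p:2n-2}, the equation $B_q(e_{b_1}|_q, e_{b_2}|_q)=0$ has exactly two nonzero summands, coming from the faces $i_1 q$ and $i_2 q$, so it takes the form $P_1 c_{i_1 q} + P_2 c_{i_2 q} = 0$ with explicit nonvanishing products $P_1,P_2$ of determinants $d_{jkl}$. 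This pins down the ratio $c_{i_1 q}:c_{i_2 q}$, and letting $(i_1,i_2)$ range over all pairs shows that $(c_{iq})_{i\ne q}$ is determined up to a single scalar~$\lambda_q$; comparison with the known solution gives $c_{iq} = \lambda_q \eta_i^{(q)} \big/ \prod_{j\ne i,q} d_{jiq}$. Mutual consistency of the many pair-equations for different choices of $(b_1,b_2)$ is automatic, thanks to the existence of the explicit cocycle of Proposition~\ref{p:2n-2}.

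Then I would globalize. Each face $iq$ is shared between the two d-simplices $q$ and $i$, so its coefficient $c_{iq}$ must coincide whether read off from the $\lambda_q$- or the $\lambda_i$-normalization; combined with the symmetry $c_{iq}=c_{qi}$ noted in the remark after Proposition~\ref{p:2n-2}, this forces $\lambda_q=\lambda_i$ for every incident pair, hence a single global constant~$\lambda$. The main obstacle is the rank count in the local step: one must be sure that the pair-equations really do have rank $2n-1$ in the $2n$ unknowns $(c_{iq})_{i\ne q}$, rather than leaving a higher-dimensional slack on some d-simplex. Existence of~\eqref{cip} bounds the rank from above by $2n-1$, and the observation that \emph{every} pair $i_1\ne i_2$ of non-$q$ vertices is directly linked by a pair-equation provides the matching lower bound; a cleaner alternative would be to compute the kernel of the restriction map from the $2n$-dimensional space of diagonal bilinear forms to the space of symmetric bilinear forms on $V_g(q)$ directly, using the explicit basis $\{e_b|_q:b\subset\Delta^{n-1}\}$ of Proposition~\ref{p:nr}.
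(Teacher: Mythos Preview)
Your proof is correct and follows essentially the same route as the paper's: test the cocycle condition on pairs of non-intersecting g-simplex vectors to obtain two-term relations that pin down the ratios~$c_{i_1q}:c_{i_2q}$, then chain these ratios together. Your write-up is in fact more explicit than the paper's, which compresses both the local and global steps into the single remark that ``there is a chain $c_{iq}:c_{jq}:\cdots:c_{mq}$ of such ratios connecting any given $c_{iq}$ and~$c_{mq}$''; your separation into a local step (fixing all ratios within one d-simplex up to~$\lambda_q$) and a global step (matching the~$\lambda_q$ across shared faces) makes the connectivity argument more transparent.
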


\begin{proof}
Consider again, as in the proof of Proposition~\ref{p:2n-2}, $(n-2)$-simplex vectors for two non-inter\-secting simplices $\Delta_1^{n-2}$ and~$\Delta_2^{n-2}$ as $x$ and~$y$ in~\eqref{xy_2n-2}. As there are just two nonvanishing summands in the r.h.s.\ of~\eqref{xy_2n-2}, and they give zero together, the ratio of the two corresponding coefficients~$c_{iq}$ is determined uniquely. Clearly, there is a chain $c_{iq}:c_{jq}:\,\cdots\,:c_{mq}$ of such ratios connecting any given $c_{iq}$ and~$c_{mq}$.
\end{proof}

\subsection[Bipolynomial $(2n-1)$-cocycles in finite characteristic from the bilinear $(2n-2)$-cocycle in characteristic~$0$]{Bipolynomial $\boldsymbol{(2n-1)}$-cocycles in finite characteristic from the bilinear $\boldsymbol{(2n-2)}$-cocycle in characteristic~$\boldsymbol{0}$}\label{ss:pc}

It turns out that our bilinear $(2n-2)$-cocycle~\eqref{2n-2},~\eqref{cip} can yield also \emph{bipolynomial $(2n-1)$-cocycles} needed for constructing an invariant such as described in our Section~\ref{s:f}. To be more exact, we take our bilinear $(2n-2)$-cocycle~\eqref{2n-2},~\eqref{cip} over the field~$\mathcal F$ of rational functions of all needed indeterminates living in vertices over field~$F=\mathbb Q$ of rational numbers. Let $(x,y)$ denote a pair of permitted colorings, $(x_v,y_v)$ its components on a $(2n-2)$-face~$v$, and write the corresponding mapping~\eqref{bc} as
\begin{equation}\label{fb}
\mathfrak c_v\colon \;\, (x_v,y_v) \mapsto c_v x_v y_v,
\end{equation}
changing notations just a bit with respect to~\eqref{2n-2}.

Let $p$ be a prime number, $k$ a natural number, and consider the following five consecutive operations on cocycle~\eqref{fb}.
\begin{enumerate}\itemsep 0pt
 \item\label{i:r} Raise each expression~$c_v x_v y_v$ to the power~$p^k$. Note that although this may call to mind a Frobenius endomorphism, we are, at the moment, in characteristic~$0$. We get a bipolynomial $(2n-2)$-\emph{cochain} with components
\begin{equation}\label{k}
c_v^{p^k} x_v^{p^k} y_v^{p^k}.
\end{equation}
 \item\label{i:b} Take the coboundary of cochain~\eqref{k}.
 \item\label{i:d} Divide the result by~$p$.
 \item\label{i:e} Reduce the result modulo~$p$.
 \item\label{i:t} Optional restriction: set $y=x$.
\end{enumerate}

More rigorously, item~\ref{i:e} means the following. First, we introduce a \emph{discrete valuation}~\cite{discr-val} on field $\mathbb Q(\alpha _1, \ldots, \gamma _{2n+1})$ as follows. Let $P(\alpha _1, \ldots, \gamma _{2n+1})$ and~$Q(\alpha _1, \ldots, \gamma _{2n+1})$ be polynomials with \emph{integer coefficients} and such that there is there is at least one coefficient not divisible by~$p$ in both $P$ and~$Q$, then the valuation is
\begin{equation}\label{pl}
p^l \, \frac{P(\alpha _1, \ldots, \gamma _{2n+1})}{Q(\alpha _1, \ldots, \gamma _{2n+1})}\, \mapsto\, l.
\end{equation}
Then we extend this valuation to polynomials whose variables are face colors $x_v$ and~$y_v$ (for all $(2n-2)$-faces in our P-simplex) and coefficients lie in $\mathbb Q(\alpha _1, \ldots, \gamma _{2n+1})$, setting the valuation of all $x_v$ and~$y_v$ to zero. Note that this entails no contradiction, because of the explicit form of linear dependences between colors given by matrices described in Subsection~\ref{ss:mat}, with entries~\eqref{A-expl}.

Reduction modulo~$p$ of the l.h.s.\ of~\eqref{pl} can be done if $l\ge 0$, and gives zero for $l>0$ and
\begin{equation*}
\frac{P(\alpha _1, \ldots, \gamma _{2n+1})}{Q(\alpha _1, \ldots, \gamma _{2n+1})}
\end{equation*}
for $l=0$, where we take the liberty of denoting the polynomials and indeterminates over~$\mathbb F_p$ by the same letters as over~$\mathbb Q$.

\begin{proposition}\label{p:f}
The first four steps \ref{i:r}--\ref{i:e} lead correctly from bilinear cocycle~\eqref{fb} to a bipolynomial $(2n-1)$-cocycle of degrees $p^k$ in both $x$ and~$y$ over the field~$\mathcal F_p = \mathbb F_p (\alpha _1, \ldots, \gamma _{2n+1})$ (compare~\eqref{bF}), where $\mathbb F_p$ is of course the prime field of $p$ elements. 

The fifth optional step~\ref{i:t} gives a polynomial $(2n-1)$-cocycle of degree~$2p^k$.
\end{proposition}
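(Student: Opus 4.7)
The plan is to combine the Frobenius endomorphism modulo~$p$ with the universal identity $\delta^2=0$. Although we work in characteristic zero throughout, the $p^k$-th power cochain produced by step~\ref{i:r} becomes a cocycle \emph{modulo}~$p$, since modulo~$p$ the map $t\mapsto t^{p^k}$ is additive. This is what forces $\delta\alpha$ to be divisible by~$p$, so that after step~\ref{i:d} and step~\ref{i:e} we obtain a well-defined cochain over~$\mathcal F_p$; the cocycle property is then free from $\delta^2=0$.

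First I would check that $\alpha_v := c_v^{p^k} x_v^{p^k} y_v^{p^k}$ has coefficients in the local ring of the valuation~\eqref{pl}: each $c_v$ is $\pm 1$ over a product of determinants whose entries are indeterminates over $\mathbb Z$, so $c_v$ has valuation~$0$, and hence so does $c_v^{p^k}$. After parameterizing permitted colorings by free inputs (see Subsection~\ref{ss:mat}), both $\alpha$ and $\delta\alpha$ become polynomials in those parameters whose coefficients lie in this local ring, so reduction modulo~$p$ is a priori meaningful. The crucial point is to show that $\delta\alpha$ is in fact divisible by~$p$. Reducing modulo~$p$ and applying Frobenius term by term yields
$$(\delta\alpha)_{\Delta^{2n-1}}(r_1,r_2)\;\equiv\;\Bigl(\sum_{v\subset\Delta^{2n-1}} \pm\, c_v\,(r_1|_v)(r_2|_v)\Bigr)^{p^k}\;=\;\bigl((\delta\mathfrak c)_{\Delta^{2n-1}}(r_1,r_2)\bigr)^{p^k}\pmod p,$$
where one uses $(-1)^{p^k}=-1$ for odd~$p$ and the fact that signs trivialize in characteristic~$2$. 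By Proposition~\ref{p:2n-2}, $\delta\mathfrak c$ vanishes identically on permitted colorings, so the right-hand side is zero; thus every coefficient of $\delta\alpha$ has valuation $\ge 1$. Step~\ref{i:d} therefore lands back in the local ring, and step~\ref{i:e} gives a well-defined bipolynomial $(2n-1)$-cochain over~$\mathcal F_p$ of bidegree~$(p^k,p^k)$.

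That this cochain is a \emph{cocycle} is immediate from $\delta^2=0$ applied to~$\alpha$: the iterated coboundary $\delta(\delta\alpha)$ vanishes identically in characteristic zero, and both division by~$p$ and reduction modulo~$p$ preserve zero. Finally, step~\ref{i:t} is the pullback of a bipolynomial cochain along the diagonal $r\mapsto(r,r)$ of permitted colorings; this pullback commutes with the coboundary, so it sends cocycles to cocycles. The degree count is transparent: $(r_i|_v)^{p^k}$ is homogeneous of degree~$p^k$ in the parameters of~$r_i$, which is preserved by coboundary, division by~$p$, and reduction, and setting $y=x$ turns bidegree $(p^k,p^k)$ into total degree~$2p^k$.

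The main technical obstacle I anticipate is the bookkeeping around the valuation: one must pin down a parameterization of permitted colorings, expand $\delta\alpha$ as a polynomial in those parameters, and confirm that the coefficients really lie in the local ring in which both ``reduction mod~$p$'' and ``divisible by~$p$'' have unambiguous meaning (including the sign subtlety when $p=2$). Once that framework is in place, the combination of Frobenius and $\delta^2=0$ delivers the proposition with essentially no further computation.
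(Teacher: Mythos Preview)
Your proposal is correct and follows essentially the same two-step logic as the paper: first, the coboundary of the $p^k$-th-power cochain is divisible by~$p$ because of the cocycle relation~\eqref{cvxvyv}; second, the result is a cocycle because it is (a rescaling and reduction of) a coboundary, hence $\delta^2=0$ applies. Your packaging of the first step via Frobenius mod~$p$ is a slightly cleaner phrasing of what the paper does by substituting one summand of~\eqref{cvxvyv} into~\eqref{cxyk} and expanding---both are the same multinomial-coefficient argument---and your explicit handling of the sign issue at $p=2$ and of step~\ref{i:t} fills in details the paper leaves implicit.
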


\begin{proof}
We need to prove the following two points.

\textit{Feasibility of item~\ref{i:e}.}
Let $w$ be an oriented $(2n-1)$-simplex, and $\epsilon _v^{(w)}$ the incidence number between~$w$ and its face~$v$. As~\eqref{fb} is a cocycle,
\begin{equation}\label{cvxvyv}
\sum _{v\subset w} \epsilon _v^{(w)} c_v x_v y_v = 0.
\end{equation}
Our discrete valuation for all summands in~\eqref{cvxvyv} is zero (taking into account~\eqref{cip} for coefficients~$c_v$). Using~\eqref{cvxvyv}, we can express one of the summands through the rest of them and substitute in
\begin{equation}\label{cxyk}
\sum _{v\subset w} \epsilon _v^{(w)} (c_v x_v y_v)^{p^k}.
\end{equation}
When we expand the result, its coefficients will clearly be all divisible by~$p$.

\textit{The result of~\ref{i:e} is a cocycle.}
This follows from the fact that the result of~\ref{i:d} is a coboundary and hence a cocycle.

\end{proof}

A handy explicit expression for the resulting $(2n-1)$-cocycle can be obtained using \emph{Newton's identities}~\cite[Section~I.2]{Macdonald} for power sums and elementary symmetric functions, with the summands $\epsilon _v^{(w)} c_v x_v y_v$ in the l.h.s.\ of~\eqref{cvxvyv} taken as variables. Equality~\eqref{cvxvyv} means of course that their \emph{first power sum vanishes}.

In characteristic two, the following symbol is useful in these calculations and writing out the results:
\begin{equation*}
\tilde{\epsilon}_v^{(w)} \,\stackrel{\mathrm{def}}{=}\, \frac{\epsilon _v^{(w)}+1}{2} = 
\begin{cases} 1 & \text{if \ } \epsilon _v^{(w)}=1 \\
              0 & \text{if \ } \epsilon _v^{(w)}=-1 \end{cases}
\end{equation*}

\begin{example}\label{x:h2}
For $p=2$, \ $k=1$, the $(2n-1)$-cocycle value on a d-sim\-plex~$w$ is
\begin{equation}\label{h2}
\sum _{\substack{v,v'\subset w\\ v < v'}} c_v c_{v'} x_v x_{v'}y_v y_{v'}
 +\sum _{v\subset w} \tilde{\epsilon}_v^{(w)} (c_v x_v y_v)^2,
\end{equation}
where $v$ and~$v'$ are its faces. Taking some liberty, we also assume that these faces are numbered, and understand their \emph{numbers} when writing ``$v < v'$''.

Our calculations for specific manifolds below in Subsection~\ref{ss:M} show that \eqref{h2} is, at least in the \emph{pentagon} ($n=2$) and \emph{heptagon} ($n=3$) cases, a \emph{nontrivial} cocycle---not a coboundary.
\end{example}

Note, by the way, that in characteristic two, a Frobenius endomorphism applied to~\eqref{cvxvyv} gives
\begin{equation*}
\sum _{v\subset w} (c_v x_v y_v)^2 = 0.
\end{equation*}
Hence, $\tilde{\epsilon}_v^{(w)}$ in~\eqref{h2} can be replaced painlessly by $1-\tilde{\epsilon}_v^{(w)}$, if needed.

\begin{example}\label{x:h3}
For $p=3$, \ $k=1$, the $(2n-1)$-cocycle value on a d-sim\-plex~$w$ is
\begin{equation}\label{h3}
\sum _{\substack{v_1,v_2,v_3\subset w\\ v_1 < v_2 < v_3}} \epsilon _{v_1}^{(w)} \epsilon _{v_2}^{(w)} \epsilon _{v_3}^{(w)}\, c_{v_1}c_{v_2}c_{v_3}\, x_{v_1}x_{v_2}x_{v_3}\, y_{v_1}y_{v_2}y_{v_3},
\end{equation}
where $v_1$, $v_2$ and~$v_3$ are its faces.

A calculation shows that \eqref{h3} is, at least in the heptagon case, again a nontrivial cocycle and, moreover, remains nontrivial after the restriction $y=x$ of our item~\ref{i:t}.
\end{example}

\subsection{Calculations for specific manifolds and characteristic two}\label{ss:M}

Here we show by direct calculations that at least cocycle~\eqref{h2}, over a field of characteristic~2, gives rise to a nontrivial invariant.

In all studied cases, $I(M)$ turned out to be a ``bi-semilinear form'' with respect to \emph{squaring} the coordinates in the factor space~$V=V_p/V_g$. That is, if we denote $X$ and~$Y$ the rows of squares of coordinates of two permitted colorings $x$ and~$y$ of~$M$ w.r.t.\ some basis in~$V$, then $I(M)$ is represented by a symmetric bilinear form~$B(X,Y)$ such that $B(X,X)=0$ identically. This all does not seem very evident from the cocycle expression~\eqref{h2}.

The calculation results are in Table~\ref{t:hm}. Note that in characteristic~2 the mentioned bilinear forms~$B(X,Y)$ are also \emph{symplectic} and can be brought by a linear transformation of their variables to a canonical block diagonal form with some blocks $\begin{pmatrix} 0 & 1 \\ 1 & 0 \end{pmatrix}$ and the other zeroes, see for instance~\cite{Grove}. Note also that, in characteristic~2, a linear transformation of `quadratic' vectors $X$ and~$Y$ corresponds to a basis change in space~$V$. Hence, the obvious invariant of such a form is its \emph{rank}, so we present these ranks in our Table~\ref{t:hm}.
\begin{table}
 \centering
 \begin{subtable}[b]{0.4\textwidth}
  \begin{equation*}
   \renewcommand{\arraystretch}{1.5}
   \begin{array}{|c|c|c|}
    \hline
    M & \dim V & \rank B \\ \hline
    \hline
    L(3,1) & 0 & 0 \\ \hline
    L(4,1) & 2 & 0 \\ \hline
    T^3 & 6 & 0 \\ \hline
    S^1\times S^2 & 2 & 0 \\ \hline
    S^1\times \mathbb RP^2 & 4 & 4 \\ \hline
    \mathbb RP^3 & 2 & 0 \\ \hline
   \end{array}
  \end{equation*}
  \caption{Three-dimensional manifolds}
  \label{t:c2d3}
 \end{subtable}  
  \qquad
 \begin{subtable}[b]{0.5\textwidth}
  \begin{equation*}
   \renewcommand{\arraystretch}{1.5}
   \begin{array}{|c|c|c|}
    \hline
    M & \dim V & \rank B \\ \hline
    \hline
    S^3\times \mathbb RP^2 & 6 & 0 \\ \hline
    S^2\times S^3 & 6 & 0 \\ \hline
    S^2\times \mathbb RP^3 & 12 & 0 \\ \hline
    S^1\times \mathbb RP^4 & 12 & 12 \\ \hline
    S^1\times \mathbb RP^2\times \mathbb RP^2 & 30 & 24 \\ \hline
    \mathbb RP^2\times \mathbb RP^3 & 18 & 12 \\ \hline
    T^5 & 60 & 0 \\ \hline
   \end{array}
  \end{equation*}
  \caption{Five-dimensional manifolds}
  \label{t:c2d5}
 \end{subtable}  
 \caption{Invariants following from $I(M)$ for some specific manifolds}
 \label{t:hm}
\end{table}

\subsection[More nontrivial $(2n-1)$-cocycles for one P-simplex]{More nontrivial $\boldsymbol{(2n-1)}$-cocycles for one P-simplex}\label{ss:oc}

Computer calculations of the dimension of a $(2n-1)$-cohomology space for a P-simplex show that it can be surprisingly nontrivial. Table~\ref{t:hn} presents some calculation results waiting for their theoretical explanation.
\begin{table}
 \centering
 \begin{subtable}[b]{0.45\textwidth}
  \begin{equation*}
   \renewcommand{\arraystretch}{1.5}
   \begin{array}{|c|c|c|c|}
    \hline
    \characteristic \mathcal F & \; n \; & \text{degree} & \dim H^{2n-1} \\ \hline
    \hline
    0 & 3 & 2 & 1 \\ \hline
    0 & 3 & 3 & 0 \\ \hline
    0 & 3 & 4 & 0 \\ \hline
    2 & 3 & 2 & 6 \\ \hline
    2 & 3 & 3 & 6 \\ \hline
    2 & 3 & 4 & 7 \\ \hline
    3 & 3 & 3 & 6 \\ \hline
    3 & 3 & 4 & 6 \\ \hline
    3 & 3 & 5 & 0 \\ \hline
    3 & 3 & 6 & 2 \\ \hline
    5 & 3 & 5 & 6 \\ \hline
    5 & 3 & 6 & 6 \\ \hline
    7 & 3 & 7 & 6 \\ \hline
    7 & 3 & 8 & 6 \\ \hline
    11 & 3 & 11 & 6 \\ \hline
    11 & 3 & 12 & 6 \\ \hline
   \end{array}
  \end{equation*}
  \caption{Heptagon case ($n=3$)}
  \label{t:hn-left}
 \end{subtable}
  \qquad
 \begin{subtable}[b]{0.45\textwidth}
  \begin{equation*}
   \renewcommand{\arraystretch}{1.5}
   \begin{array}{|c|c|c|c|}
    \hline
    \characteristic \mathcal F & \; n \; & \text{degree} & \dim H^{2n-1} \\ \hline
    \hline
    0 & 4 & 2 & 0 \\ \hline
    2 & 4 & 2 & 10 \\ \hline
    2 & 4 & 3 & 8 \\ \hline
    2 & 4 & 4 & 11 \\ \hline
    3 & 4 & 3 & 10 \\ \hline
    3 & 4 & 4 & 8 \\ \hline
    3 & 4 & 5 & 0 \\ \hline
    5 & 4 & 5 & 10 \\ \hline
    5 & 4 & 6 & 8 \\ \hline
   \end{array}
  \end{equation*}
  \caption{Enneagon case ($n=4$)}
  \label{t:hn-right}
 \end{subtable}
  \caption{Dimensions of some spaces of nontrivial $(2n-1)$-cocycles for~$\Delta^{2n}$. Recall that $n=3$ corresponds to heptagon, and $n=4$ ---to enneagon}
 \label{t:hn}
\end{table}

The P-simplex case looks important because the boundary of a P-simplex is where a Pachner move takes place. It must be remarked, however, that in order to be applied to arbitrary manifolds, there must be, first, an explicit expression for a $(2n-1)$-cocycle, and second, this expression must have such form that can be applied not only to the d-simplices taking part in a Pachner move but also to the d-simplices in an arbitrary triangulation. Formulas \eqref{h2} or~\eqref{h3} are good examples of such expressions.

Although there are still problems to be solved on this way, we present below in Section~\ref{s:5c} some interesting calculations for one particular case (the first line in Table~\ref{t:hn-left}).

\section{Heptagon: nontrivial bilinear 5-cocycle in characteristic zero}\label{s:5c}

In Section~\ref{s:2n-2}, we showed how to construct polynomial $(2n-1)$-cocycles \emph{in finite characteristic} from the bilinear $(2n-2)$-cocycle~\eqref{2n-2},~\eqref{cip}. Very interestingly, in the \emph{heptagon} case, $n=3$, a nontrivial \emph{quadratic} $(2n-1)=5$-cocycle exists already in characteristic~0, for the simplicial complex $K=\Delta^6$ ---that is, what we call a P-simplex for the heptagon relation. Of course, we can obtain a symmetric bilinear cocycle by polarizing this quadratic cocycle.

\subsection{Why a nontrivial quadratic 5-cocycle must exist}\label{ss:5c-why}

We first calculate the numbers of linearly independent quadratic 4-, 5- and 6-cochains in our simplicial complex $K=\Delta^6=1234567$.

\paragraph{4-cochains.}
There are 21 linearly independent 4-cochains, one for each 4-dimen\-sional face~$iq$, namely cochains~$x_{iq}^2$, where $x_{iq}$ is the color of~$iq$. Recall that ``$iq$'' means the face containing all vertices $1,\ldots,7$ \emph{except} $i$ and~$q$.

\paragraph{5-cochains.}
For each 5-simplex, the linear space of permitted colorings is 3-dimen\-sional: three arbitrary ``input'' colors determine three ``output'' ones. The space of quadratic forms of 3 variables is 6-dimen\-sional. And there are seven 5-simplices (that is, simplices with \emph{six} vertices!) in the heptagon relation.

Hence, there is the $7\times 6=42$-dimen\-sional linear space consisting of 7-tuples of quadratic forms, one for each 5-simplex.

\paragraph{6-cochains.}
There are six independent ``input'' colors for the whole heptagon (see again Figure~\ref{fig:hepta-new}), so there are $\frac{6\times 7}{2}=21$ linearly independent quadratic forms.

\medskip

We now write out a fragment of sequence~\eqref{hcc} for quadratic cochains, \emph{assuming that the characteristic of our field~$F$ is zero}:
\begin{equation}\label{456}
\begin{pmatrix} 21\\ \text{4-cochains} \end{pmatrix} \xrightarrow[\mathrm{rank} = 20]{\textstyle\delta} \begin{pmatrix} 42\\ \text{5-cochains} \end{pmatrix} \xrightarrow[\mathrm{rank} = 21]{\textstyle\delta} \begin{pmatrix} 21\\ \text{6-cochains} \end{pmatrix} .
\end{equation}
Here follow the explanations.

First, ``21 cochains'' stays in~\eqref{456} for ``21-dimen\-sional space of cochains'', and so on.

Second, the rank of the left coboundary operator~$\delta$ in~\eqref{456} is $20$ and not~$21$ because there exists exactly one-dimen\-sional space of 4-cocycles, according to Propositions \ref{p:2n-2} and~\ref{p:nm}.

Third, the rank of the \emph{right} operator~$\delta$ is surely $\le 21$, and this is already enough to conclude that the cohomology space dimension in the middle term is $\ge 42-20-21=1$. Actually, a direct calculation, made in characteristic~$0$, shows that the rank of the right~$\delta$ is exactly~$21$ for generic matrices~$\mathcal M$~\eqref{A}, but is looks, at this moment, more difficult to understand why it is so.

\subsection{Explicit form of 5-cocycle}\label{ss:xpl5}

We define now one more scalar product, this time between two \emph{permitted} colorings of a 5-simplex~$q$. These are generated by \emph{edge vectors}, because g-simplices are edges for heptagon. For the case where these colorings are the restrictions of edge vectors $e_{ij}$ and $e_{kl}$, respectively, on~$q$, we set
\begin{equation}\label{q5}
\langle e_{ij}, e_{kl} \rangle_5^{(q)} \, \stackrel{\mathrm {def}}{=} \, \det\eta_q \cdot (d_{ikq}d_{jlq}+d_{ilq}d_{jkq}),
\end{equation}
where
\begin{equation}\label{eta}
\eta_p = 
\begin{pmatrix} \alpha_i^2 & \alpha_j^2 & \alpha_k^2 & \alpha_l^2 & \alpha_m^2 & \alpha_n^2 \\[.5ex]
                \beta_i^2 & \beta_j^2 & \beta_k^2 & \beta_l^2 & \beta_m^2 & \beta_n^2 \\[.5ex]
                \gamma_i^2 & \gamma_j^2 & \gamma_k^2 & \gamma_l^2 & \gamma_m^2 & \gamma_n^2 \\[.5ex]
                \alpha_i \beta_i & \alpha_j \beta_j & \alpha_k \beta_k & \alpha_l \beta_l & \alpha_m \beta_m & \alpha_n \beta_n \\[.5ex]
                 \alpha_i \gamma_i & \alpha_j \gamma_j & \alpha_k \gamma_k & \alpha_l \gamma_l & \alpha_m \gamma_m & \alpha_n \gamma_n \\[.5ex]
                 \beta_i \gamma_i & \beta_j \gamma_j & \beta_k \gamma_k & \beta_l \gamma_l & \beta_m \gamma_m & \beta_n \gamma_n
\end{pmatrix} ,
\end{equation}
and $i,\ldots,n$ are the numbers from 1 through~7 \emph{except}~$q$, going in the increasing order.

As edge vectors make not a basis but an \emph{overfull system} of vectors in the linear space of all permitted colorings, the following proposition is necessary to justify this definition.

\begin{proposition}\label{p:corr5}
Formula~\eqref{q5} defines a scalar product in the linear space of permitted colorings of 5-simplex~$q$ correctly.
\end{proposition}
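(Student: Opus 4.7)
The plan is to reduce well-definedness to a compatibility check against the three-term linear relations of Proposition~\ref{p:rr}, and then identify that compatibility as a Plücker-type identity for the $3\times 3$ minors of~$\mathcal M$.

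First I would observe that the right-hand side of~\eqref{q5} is manifestly symmetric under the exchange $\{i,j\}\leftrightarrow\{k,l\}$ and under swapping the two labels inside each edge (as $d_{\cdot\cdot q}$ is antisymmetric), so the formula depends only on the unordered edges, not on the chosen orderings. The genuine issue is that the restricted edge vectors $\{e_{ij}|_q\}$ are overfull in the $n=3$-dimensional space of permitted colorings of~$q$ (Proposition~\ref{p:d-one}), and satisfy the three-term relations of Proposition~\ref{p:rr} specialized to $n=3$, $m=q$, namely
\begin{equation*}
d_{klq}\,e_{ij}|_q - d_{jlq}\,e_{ik}|_q + d_{jkq}\,e_{il}|_q = 0
\end{equation*}
for any five distinct indices $i,j,k,l,q\in\{1,\ldots,7\}$. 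As remarked after Proposition~\ref{p:nr}, these relations generate all linear dependences among edge vectors on~$q$.

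Next I would check that \eqref{q5} respects each such relation in the first argument. Fixing an arbitrary second edge $\alpha\beta$ (not containing $q$) and substituting~\eqref{q5}, the common factor $\det\eta_q$ pulls out and the compatibility condition becomes
\begin{equation*}
d_{i\alpha q}\bigl[d_{klq}d_{j\beta q}-d_{jlq}d_{k\beta q}+d_{jkq}d_{l\beta q}\bigr]
+d_{i\beta q}\bigl[d_{klq}d_{j\alpha q}-d_{jlq}d_{k\alpha q}+d_{jkq}d_{l\alpha q}\bigr]=0.
\end{equation*}
Each bracket is a three-term Plücker relation for the $3\times 3$ minors of~$\mathcal M$ sharing the column~$q$: for any four columns $v_j,v_k,v_l,v_\star$ of the $3\times 7$ matrix~$\mathcal M$, the fact that $v_\star$ lies in the span of any three of the others (working in $F^3$, or generically after a linear expansion $v_\star=\lambda_jv_j+\lambda_kv_k+\lambda_lv_l$) immediately yields the vanishing of $d_{klq}d_{j\star q}-d_{jlq}d_{k\star q}+d_{jkq}d_{l\star q}$. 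Hence each bracket is zero, and the compatibility holds.

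Finally, compatibility in the second argument follows by the same computation combined with the symmetry of~\eqref{q5} noted above. Together these show that the bilinear form assigning to a pair of edge-vector restrictions the value~\eqref{q5} descends to a well-defined bilinear form on the linear span, i.e.\ on the full space of permitted colorings of~$q$. The main obstacle I anticipate is simply the bookkeeping of signs and the verification of the three-term Plücker identity; once that identity is recognized as a standard consequence of $\dim F^3=3$, the rest is essentially mechanical.
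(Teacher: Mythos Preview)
Your proposal is correct and follows essentially the same route as the paper: reduce well-definedness to compatibility with the three-term relations~\eqref{lr_2n-1} in one argument (the other following by symmetry), and verify that compatibility by recognizing the resulting expression as a Pl\"ucker identity among the $3\times 3$ minors~$d_{\cdot\cdot q}$ of~$\mathcal M$. Your version is slightly more explicit in writing out the two-bracket decomposition, but the logic and the key identity are the same as the paper's \eqref{cor1}--\eqref{corr}.
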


\begin{proof}
It must be checked that our definition~\eqref{q5} agrees with three-term linear dependences~\eqref{lr_2n-1}. For instance, there is the following linear dependence between three versions of vector~$e_{ij}$:
\begin{equation}\label{cor1}
d_{j_2j_3q} e_{ij_1}|_q - d_{j_1j_3q} e_{ij_2}|_q + d_{j_1j_2q} e_{ij_3}|_q = 0,
\end{equation}
so, the sum of the three corresponding scalar products~\eqref{q5} must give zero. And this is indeed so due to the determinants $d_{jlq}$ and~$d_{jkq}$ entering in the r.h.s.\ of~\eqref{q5}. For instance, for the first of these determinants, a simple consequence from Pl\"ucker bilinear relation gives
\begin{equation}\label{cor2}
d_{j_2j_3q} d_{j_1 lq} - d_{j_1j_3q} d_{j_2 lq} + d_{j_1j_2q} d_{j_3 lq} = 0.
\end{equation}
Hence, for the scalar product defined according to~\eqref{q5} we also have the desirable equality showing that definition~\eqref{q5} is self-consistent:
\begin{equation}\label{corr}
d_{j_2j_3q} \langle e_{ij_1}, e_{kl} \rangle_5^{(q)} - d_{j_1j_3q} \langle e_{ij_2}, e_{kl} \rangle_5^{(q)} + d_{j_1j_2q} \langle e_{ij_3}, e_{kl} \rangle_5^{(q)} = 0. 
\end{equation}
\end{proof}

\begin{proposition}\label{p:cr}
Formulas \eqref{q5} and~\eqref{eta} define, indeed, a cocycle:
\begin{equation}\label{5c-usl}
\sum _{q=1}^7 (-1)^q \langle e_{ij}, e_{kl} \rangle_5^{(q)} = 0
\end{equation}
for any two edges $ij$ and~$kl$.
\end{proposition}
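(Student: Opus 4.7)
The plan is to recognize $\det\eta_q$ as a Cramer-type coefficient in a linear dependence among seven vectors obtained via the second Veronese embedding. Precisely, $\eta_q$ of~\eqref{eta} is the $6\times 6$ matrix obtained by deleting the $q$-th column from the $6\times 7$ matrix $\widetilde\eta$ whose $q$-th column lists the six quadratic monomials $(\alpha_q^2,\beta_q^2,\gamma_q^2,\alpha_q\beta_q,\alpha_q\gamma_q,\beta_q\gamma_q)$ built from the $q$-th column of $\mathcal M$ in~\eqref{A}. The seven columns of $\widetilde\eta$ live in a six-dimensional ambient space and are therefore linearly dependent; reading off this dependence componentwise---equivalently, Laplace-expanding along the first row of the singular $7\times 7$ matrix obtained by prepending any row of $\widetilde\eta$ to $\widetilde\eta$ itself---we obtain, for each of the six monomials $m\in\{\alpha^2,\beta^2,\gamma^2,\alpha\beta,\alpha\gamma,\beta\gamma\}$,
\begin{equation*}
\sum_{q=1}^{7}(-1)^q\det\eta_q\cdot m_q=0,
\end{equation*}
where $m_q$ denotes the value of $m$ on the $q$-th column of $\mathcal M$. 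By linearity, the same vanishing extends to any quadratic form $Q(\alpha_q,\beta_q,\gamma_q)$ whose coefficients do not depend on~$q$.

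The second factor $d_{ikq}d_{jlq}+d_{ilq}d_{jkq}$ in~\eqref{q5} is precisely such a $q$-independent quadratic form: each $d_{ikq}=\det(\ldots)$ is linear in $(\alpha_q,\beta_q,\gamma_q)$ with coefficients built only from the $i$-th and $k$-th columns of $\mathcal M$ (namely their cross product), and analogously for $d_{jlq},d_{ilq},d_{jkq}$. Writing $d_{ikq}d_{jlq}+d_{ilq}d_{jkq}=\sum_m a_m(i,j,k,l)\,m_q$ with $a_m$ independent of $q$ and substituting into~\eqref{q5},
\begin{equation*}
\sum_{q=1}^{7}(-1)^q\langle e_{ij},e_{kl}\rangle_{5}^{(q)}=\sum_m a_m(i,j,k,l)\sum_{q=1}^{7}(-1)^q\det\eta_q\cdot m_q=0,
\end{equation*}
which is exactly~\eqref{5c-usl}.

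Two minor remarks. First, the summands with $q\in\{i,j,k,l\}$ happen to vanish individually because some $d_{\cdot\cdot q}$ then has a repeated column (e.g.\ $d_{iki}=0$ when $q=i$), but this observation is not used in the argument---all seven values of $q$ participate uniformly in the Veronese dependence, and no case analysis is needed. Second, the only point requiring a bit of care is to match the sign $(-1)^q$ in~\eqref{5c-usl} with the Cramer sign of $\det\eta_q$ in the Veronese relation, but this is automatic from the convention in~\eqref{eta} that the six surviving indices are listed in increasing order. I do not foresee any substantive obstacle along this line.
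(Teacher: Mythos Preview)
Your argument is correct and is genuinely more illuminating than the paper's own proof, which reads in its entirety ``Direct calculation.'' You have identified the structural reason behind the identity: the columns of~$\widetilde\eta$ are the images of the seven columns of~$\mathcal M$ under the second Veronese map into a six-dimensional space, so Cramer's relation forces $\sum_q(-1)^q\det\eta_q\cdot m_q=0$ for every quadratic monomial~$m$; and the remaining factor $d_{ikq}d_{jlq}+d_{ilq}d_{jkq}$ is a quadratic form in the $q$-th column of~$\mathcal M$ with coefficients independent of~$q$, because each~$d_{\cdot\cdot q}$ is linear in that column. Your sign bookkeeping is also right, since~\eqref{eta} lists the surviving indices in increasing order, so~$\det\eta_q$ is exactly the $(1,q)$-minor in the Laplace expansion. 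The remark that the terms with $q\in\{i,j,k,l\}$ vanish individually is correct but, as you note, unnecessary for the argument---those values of~$q$ simply contribute zero on both sides.

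What your approach buys over a brute-force verification is an explanation that would generalize: any bilinear pairing of the shape $\det\eta_q\cdot Q_q$, with $Q_q$ a fixed quadratic form evaluated on the $q$-th column of~$\mathcal M$, will satisfy the cocycle identity~\eqref{5c-usl} for the same reason. This also clarifies why the specific combination $d_{ikq}d_{jlq}+d_{ilq}d_{jkq}$ is not essential for the \emph{cocycle} property (it is essential, rather, for Proposition~\ref{p:corr5}, i.e.\ compatibility with the three-term relations~\eqref{lr_2n-1}).
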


\begin{proof}
Direct calculation.
\end{proof}

\subsection{Nontriviality}\label{ss:ntr}

\begin{proposition}\label{p:nt}
Cocycle defined according to \eqref{q5} and~\eqref{eta} is nontrivial---not a coboundary.
\end{proposition}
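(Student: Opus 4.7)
The plan is as follows. By the dimension count in Subsection~\ref{ss:5c-why}, together with the explicit verification that the right coboundary in~\eqref{456} has rank exactly $21$, one has $\dim H^5 = 42 - 20 - 21 = 1$; thus a nontrivial class exists in principle, and what must be checked is that the specific cocycle of~\eqref{q5}--\eqref{eta} represents a nonzero element of this one-dimensional $H^5$.

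First I would exploit the structural contrast with coboundaries. A quadratic $4$-cochain has the form $c'=\sum_v f_v x_v^2$ with $f_v\in\mathcal F$, and its coboundary restricted to a d-simplex $q$ is the ``face-diagonal'' quadratic form $(\delta c')_q=\sum_{v\subset q}\epsilon_v^{(q)} f_v x_v^2$, which is a sum of squares of the six linear forms $x_v$ (the face colors of $q$ written in the $3$-dimensional coordinate system of permitted colorings of $q$). Our cocycle, by contrast, carries the scalar factor $\det\eta_q$ and the mixed quadratic structure $d_{ikq}d_{jlq}+d_{ilq}d_{jkq}$, so the question reduces to whether the global rational coefficients $f_v$ can be chosen to simultaneously reproduce all seven of these quadratic forms.

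Second, to pin down the obstruction, I would set up the linear system $c=\delta c'$ explicitly. There are $21$ unknowns $f_v$ (one per $(2n-2)$-face), and the equation at each d-simplex $q$ imposes six scalar conditions (one per basis quadratic form on the $3$-dimensional permitted-coloring space of $q$), giving $42$ equations in total. Since the image of $\delta\colon C^4\to C^5$ is $20$-dimensional, there must be $42-20=22$ independent linear functionals on $C^5$ that annihilate every coboundary; the nontriviality of our cocycle amounts to showing that at least one of these functionals fails to annihilate $c$. The cleanest way to close this is by specialization: choose rational numerical values for the $21$ entries of $\mathcal M$ with every $\det\eta_q\ne 0$ and all relevant Pl\"ucker determinants generic; then the $20$ coboundary generators $\delta(x_v^2)$ become $20$ explicit vectors in a $42$-dimensional rational vector space, and our cocycle becomes one more explicit vector. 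A rank computation over $\mathbb Q$ shows that the rank jumps from $20$ to $21$ upon adjoining $c$, which lifts immediately to the statement that $c$ is not a coboundary over $\mathcal F$ either.

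The main obstacle is the \emph{global} nature of the coboundary relation: at any individual d-simplex $q$ the six squares $x_v^2$ generically span the full $6$-dimensional space of quadratic forms on the permitted-coloring space of $q$, so the ``local'' question is vacuous and all rigidity enters only through the requirement that the \emph{same} scalars $f_v$ work on every d-simplex containing the face $v$. I see no way around invoking either a direct Gaussian elimination (as above) or a structural argument rooted in the precise form of $\det\eta_q$; conceptually the more satisfying route would be to identify an explicit ``non-diagonal'' linear functional on $C^5$ annihilating all coboundaries but pairing nontrivially with the $\det\eta_q$-factor, but no such functional is evident from the formulas as they stand.
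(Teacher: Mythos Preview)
Your computational strategy---specialize $\mathcal M$ to rational values and verify that the rank jumps from $20$ to $21$ when the cocycle is adjoined to the image of~$\delta$---would work, but you have misdiagnosed where the obstruction lives, and this causes you to overlook a two-line argument.

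The assertion that ``at any individual d-simplex~$q$ the six squares $x_v^2$ generically span the full $6$-dimensional space of quadratic forms'' is \emph{false} here. The six face-color functionals $\ell_i\colon r\mapsto r|_{iq}$ are not in general position: by~\eqref{e_b} one has $\ell_i(e_{jk})=d_{jiq}d_{kiq}$ whenever $i\notin\{j,k\}$ (and $0$ otherwise), and this product structure forces
\[
\ell_i(e_{12})\,\ell_i(e_{34})\;=\;\ell_i(e_{13})\,\ell_i(e_{24})\;=\;\ell_i(e_{14})\,\ell_i(e_{23})\qquad\text{for every }i.
\]
Consequently every symmetric bilinear form in the span of the rank-one forms $\ell_i\otimes\ell_i$ assigns the \emph{same} value to the three pairings $(e_{12},e_{34})$, $(e_{13},e_{24})$, $(e_{14},e_{23})$. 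The six squares therefore span only a proper subspace of the quadratic forms on the permitted-coloring space of~$q$, and the local question is not vacuous at all.

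This is precisely the paper's proof, carried out at $q=7$: any coboundary, being a combination of the products $x_{iq}y_{iq}$, would force $\langle e_{12},e_{34}\rangle_5^{(7)}=\langle e_{13},e_{24}\rangle_5^{(7)}=\langle e_{14},e_{23}\rangle_5^{(7)}$, whereas formula~\eqref{q5} gives three visibly distinct values. No global bookkeeping, no numerical specialization, and no rank computation are needed; the ``explicit non-diagonal functional annihilating all coboundaries'' that you were looking for in your last paragraph is exactly the difference of any two of these three pairings.
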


\begin{proof}
Coboundary is, in this situation, a linear combination of~$x_{iq}^2$ taken over 4-faces. The scalar product $\langle x, y \rangle_5^{(q)}$ corresponding to such a cocycle would then be a linear combination of products~$x_{iq}y_{iq}$. Taking into account that, according to~\eqref{e_b},
\begin{equation*}
e_{ij}|_{ijlm} = d_{ilm} d_{jlm}
\end{equation*}
for heptagon, we see that $\langle e_{12}, e_{34} \rangle_5^{(7)}$, $\langle e_{13}, e_{24} \rangle_5^{(7)}$ and $\langle e_{14}, e_{23} \rangle_5^{(7)}$ would all three coincide---but they are actually all different.
\end{proof}

\subsection{One more observation}\label{ss:ob}

Matrix~$\mathcal M$~\eqref{A} can be reduced, by a linear transformation of its rows, to the form where its three first columns form an identity matrix. Suppose this has been already done, that is,
\begin{equation}\label{id3}
\begin{pmatrix} \alpha_1 & \alpha_2 & \alpha_3 \\
                \beta_1 & \beta_2 & \beta_3 & \\ 
                \gamma_1 & \gamma_2 & \gamma_3 \end{pmatrix} =
\begin{pmatrix} 1 & 0 & 0 \\ 0 & 1 & 0 \\ 0 & 0 & 1 \end{pmatrix},
\end{equation}
and consider the determinant of matrix~$\eta_7$~\eqref{eta} for such~$\mathcal M$. A calculation shows that
\begin{align}\label{dh}
\det \eta_7 = 
 \alpha_4  \beta_4  \alpha_5  \gamma_5  \beta_6  \gamma_6 -  \alpha_4  \gamma_4  \alpha_5  \beta_5  \beta_6  \gamma_6 -  \alpha_4  \beta_4  \beta_5  \gamma_5  \alpha_6  \gamma_6 \\
+  \beta_4  \gamma_4  \alpha_5  \beta_5  \alpha_6  \gamma_6 +  \alpha_4  \gamma_4  \beta_5  \gamma_5  \alpha_6  \beta_6 -  \beta_4  \gamma_4  \alpha_5  \gamma_5  \alpha_6  \beta_6 \\
\stackrel{\mathrm {def}}{=} -\dethad
\begin{pmatrix} \alpha_4 & \alpha_5 & \alpha_6 \\
                \beta_4 & \beta_5 & \beta_6 & \\ 
                \gamma_4 & \gamma_5 & \gamma_6 \end{pmatrix} ,
\end{align}
where function `$\dethad$' on $3\times 3$ matrices was introduced in~\cite[Eq.~(7)]{igrushka} in connection with what seemed a completely different problem---evolution of a discrete-time dynamical system, where one step of evolution consisted in taking, first, the usual inverse of a matrix, and second---the ``Hadamard inverse'', that is, inverting each matrix entry separately.

\subsection{Absense of similar cocycles for pentagon, enneagon and hendecagon}\label{ss:a}

We now write out the analogues of sequence~\eqref{456} for pentagon, enneagon and hendecagon, assuming again the zero characteristic for field~$F$.

\paragraph{Pentagon}
\begin{equation}\label{234}
\begin{pmatrix} 10\\ \text{2-cochains} \end{pmatrix} \xrightarrow[\mathrm{rank} = 9]{\textstyle\delta} \begin{pmatrix} 15\\ \text{3-cochains} \end{pmatrix} \xrightarrow[\mathrm{rank} = 6]{\textstyle\delta} \begin{pmatrix} 6\\ \text{4-cochains} \end{pmatrix}
\end{equation}

\paragraph{Enneagon}
\begin{equation}\label{678}
\begin{pmatrix} 36\\ \text{6-cochains} \end{pmatrix} \xrightarrow[\mathrm{rank} = 35]{\textstyle\delta} \begin{pmatrix} 90\\ \text{7-cochains} \end{pmatrix} \xrightarrow[\mathrm{rank} = 55]{\textstyle\delta} \begin{pmatrix} 55\\ \text{8-cochains} \end{pmatrix}
\end{equation}

\paragraph{Hendecagon}
\begin{equation}\label{89A}
\begin{pmatrix} 55\\ \text{8-cochains} \end{pmatrix} \xrightarrow[\mathrm{rank} = 54]{\textstyle\delta} \begin{pmatrix} 165\\ \text{9-cochains} \end{pmatrix} \xrightarrow[\mathrm{rank} = 111]{\textstyle\delta} \begin{pmatrix} 120\\ \text{10-cochains} \end{pmatrix}
\end{equation}

The ranks of the \emph{left} operators~$\delta$ are always less than the number of $(2n-2)$-faces (and $(2n-2)$-cochains) by one, due to the existence of exactly one-dimen\-sional space of $(2n-2)$-cocycles, according to Subsection~\ref{ss:bc}.

The ranks of the \emph{right} operators~$\delta$ are again (like it was for sequence~\eqref{456}) more complicated, and were calculated using computer algebra and for generic parameters (entries of matrices~$\mathcal M$~\eqref{A}).

It follows from these ranks that there are no nontrivial cocycles in the middle terms, in a surprising contrast with the heptagon case!

\section{Discussion}\label{s:d}

Finally, some comments on possible directions of further research.

\paragraph{Polygon relations parameterized by simplicial cocycles.}
The odd-gon relations considered in this paper are, in a sense, the simplest. On the other hand, \emph{simplicial cocycles} arise very naturally in the context of polygon relations, as was shown in~\cite{3--3} on an example of a `fermionic' theory with Grassmannian variables. Further development of the theory has already started~\cite{nonconstant,hepta_simplicial}; it promises to give even richer invariants, namely those of a pair ``manifold, cohomology class''.

\paragraph{From heptagon to hexagon with two-component colors.}
Consider a 4-dimen\-sional Pachner move 3--3 and then the bicones over its l.h.s.\ (initial configuration) and r.h.s.\ (final configuration). Bicone means here the same as the join~\cite[Chapter~0]{Hatcher} with the boundary~$\partial I=\{0,1\}$ of the unit segment $I=[0,1]$. It is an easy exercise to see that the l.h.s.\ bicone can be transformed into the r.h.s.\ one by, first, a 5-dimen\-sional Pachner move 3--4 and, second, move 4--3. If there are now permitted colorings defined for the 4-faces of the 5-simplices involved, like in this paper, then we can attach two colors, or call it a \emph{two-component} color, to each 3-face of 4-simplices in the 3--3 move from which we started. It will be interesting to study connections of this construction with papers~\cite{cubic,nonconstant}.

\paragraph{More polygon cocycles.}
Our calculations in Subsection~\ref{ss:oc} and Section~\ref{s:5c} suggest that there may be many more interesting polygon cocycles.


\begin{thebibliography}{99}

\bibitem{CKS}
Scott Carter, Seiichi Kamada, Masahico Saito,
\textit{Surfaces in 4-space},
Encyclopaedia of Mathematical Sciences, Volume~142,
Springer 2004.

\bibitem{DM-H}
Aristophanes Dimakis and Folkert M\"uller-Hoissen,
\textit{Simplex and polygon equations},
\href{http://dx.doi.org/10.3842/SIGMA.2015.042}{SIGMA {\bf 11} (2015), paper~042, 49 pages}.

\bibitem{DK}
Aristophanes Dimakis and Igor G. Korepanov,
\textit{Grassmannian-parameterized solutions to direct-sum polygon and simplex equations},
\href{https://doi.org/10.1063/5.0035760}{J. Math. Phys. {\bf 62} (2021), 051701}.

\bibitem{discr-val}
Ivan B. Fesenko and Sergei V. Vostokov,
\textit{Local fields and their extensions},
Translations of Mathematical Monographs, Volume~121 (Second ed.),
Providence, RI: American Mathematical Society, 2002.

\bibitem{Grove}
Larry C. Grove,
\textit{Classical Groups and Geometric Algebra},
Graduate Studies in Mathematics, Volume~39,
Providence, RI: American Mathematical Society, 2002.

\bibitem{Hatcher}
Allen Hatcher,
\textit{Algebraic topology},
Cambridge University Press, 2002.

\bibitem{igrushka}
I.G.~Korepanov,
\textit{Exact solutions and mixing in an algebraic dynamical system},
\href{https://doi.org/10.1007/s11232-005-0092-z}{Theoretical and Mathematical Physics, {\bf 143}:1 (2005), 599--614}.

\bibitem{3--3}
Igor G. Korepanov,
\textit{Two-cocycles give a full nonlinear parameterization of the simplest 3--3 relation},
\href{https://doi.org/10.1007/s11005-014-0707-0}{Lett. Math. Phys. {\bf 104}:10 (2014), 1235--1261},
\href{https://arxiv.org/abs/1310.4075}{arXiv:1310.4075}.

\bibitem{cubic}
I.G.~Korepanov, N.M.~Sadykov,
\textit{Hexagon cohomologies and polynomial TQFT actions},
\href{https://arxiv.org/abs/1707.02847}{arXiv:1707.02847}.

\bibitem{hepta_1}
I.G.~Korepanov,
\textit{Heptagon relation in a direct sum},
Algebra i Analiz, {\bf 33}:4 (2021), 125--140 (Russian).
English translation: \href{https://arxiv.org/abs/2003.10335}{arXiv:2003.10335}.

\bibitem{hepta_simplicial}
I.G.~Korepanov,
\textit{Heptagon relations parameterized by simplicial 3-cocycles},
\href{https://arxiv.org/abs/2103.15159}{arXiv:2103.15159}.

\bibitem{nonconstant}
Igor G. Korepanov,
\textit{Nonconstant hexagon relations and their cohomology},
\href{https://doi.org/10.1007/s11005-020-01338-1}{Lett. Math. Phys. {\bf 111}, paper~1 (2021)},
\href{https://arxiv.org/abs/1812.10072}{arXiv:1812.10072}.

\bibitem{Lickorish}
W.B.R. Lickorish,
\textit{Simplicial moves on complexes and manifolds},
Geom. Topol. Monogr. {\bf 2} (1999), 299--320.
\href{http://www.arxiv.org/abs/math/9911256}{arXiv:math/9911256}.

\bibitem{Macdonald}
I.G. Macdonald,
\textit{Symmetric functions and Hall polynomials},
Second Edition,
Oxford University Press, 1995.

\bibitem{Pachner}
U. Pachner,
\textit{PL homeomorphic manifolds are equivalent by elementary shellings},
Europ. J. Combinatorics {\bf 12} (1991), 129--145.

\end{thebibliography}
\end{document}